\theoremstyle{definition}
\newtheorem{theorem}{Theorem}
\newtheorem{definition}[theorem]{Definition}  
\newtheorem{lemma}[theorem]{Lemma}
\newtheorem{notation}[theorem]{Notation}
\newtheorem{corollary}[theorem]{Corollary}
\newtheorem{proposition}[theorem]{Proposition}
\newcommand{\az}{\textrm{A}$0$}
\newcommand{\ao}{\textrm{A}$1$}
\newcommand{\at}{\textrm{A}$2$}
\newcommand{\ab}{\allowbreak}
\newcommand{\bC}{\mathbb{C}}
\newcommand{\bR}{\mathbb{R}}
\newcommand{\cA}{\mathcal{A}}
\newcommand{\cK}{\mathcal{K}}
\newcommand{\cS}{\mathcal{S}}
\newcommand{\E}{\mathrm{E}}
\newcounter{jmpnumber}\setcounter{jmpnumber}{1}
\newcounter{int}\setcounter{int}{1}
\newcommand{\listcomments}{%
\smallskip\hrule\smallskip\noindent 
\@whilenum\value{int}<\thejmpnumber\do
{\tiny Comm. \theint{} is on page \pageref{jmp\theint}\stepcounter{int}, }
\smallskip\hrule\smallskip}
\newcommand{\myref}[1]{$(\expandafter\@alph{\ref{#1}})$}
\newcommand{\centre}{\mathaccent"7017}
\begin{document}

\title[Analytic Structure of Second Order Probability
  Spaces]{On the Analytic Structure of Second-Order
  Non-Commutative Probability Spaces and Functions of
  Bounded Fr\'echet Variation}

\author[Diaz]{MARIO DIAZ}

\address{Instituto de Investigaciones en Matem\'{a}ticas
  Aplicadas y en Sistemas, Universidad Nacional Aut\'{o}noma
  de M\'{e}xico, Mexico City, Mexico}
\email{mario.diaz@sigma.iimas.unam.mx}

\author[Mingo]{JAMES A.~MINGO}

\address{Department of Mathematics and Statistics, Queen's
  University, Jeffery Hall, Kingston, Ontario, K7L 3N6,
  Canada} \email{mingo@mast.queensu.ca}

\thanks{This work was supported in part by a
    Discovery Grant from the Natural Sciences and
    Engineering Research Council of Canada, and in part by
    the Consejo Nacional de Ciencia y Tecnolog\'{i}a of
    Mexico under Grant A1-S-976.}

\begin{abstract}
In this paper we propose a new approach to the central limit
theorem (CLT), based on functions of bounded F\'echet
variation for the continuously differentiable linear
statistics of random matrix ensembles which
relies on: a weaker form of a large deviation
principle for the operator norm; a Poincar\'{e}-type
inequality for the linear statistics; and the existence of a
second-order limit distribution. This approach frames into a
single setting many known random matrix ensembles and, as a
consequence, classical central limit theorems for linear
statistics are recovered and new ones are established, e.g.,
the CLT for the continuously differentiable linear
statistics of block Gaussian matrices.

In addition, our main results contribute to the
understanding of the analytical structure of second-order
non-commutative probability spaces. On the one hand, they
pinpoint the source of the unbounded nature of the bilinear
functional associated to these spaces; on the other hand,
they lead to a general archetype for the integral
representation of the second-order Cauchy transform,
$G_2$. Furthermore, we establish that the covariance of
resolvents converges to this transform and that the limiting
covariance of analytic linear statistics can be expressed as
a contour integral in $G_2$.
\end{abstract}

\maketitle

\section{Introduction}

In his seminal work \cite{Wigner1955,Wigner1957,Wigner1958},
Wigner established that the empirical spectral measure of
certain random matrix ensembles converges, as the dimension
goes to infinity, to the semicircle distribution. Since
then, several other asymptotic phenomena have been
discovered for a wide range of random matrix
ensembles. Examples of these phenomena include, but are not
limited to, large deviations for several spectral objects
\cite{BenArousGuionnet1997,Maida2007,LedouxRider2010};
convergence and strong convergence of the empirical spectral
distribution \cite{MarchenkoPastur1967,
  HaagerupThorbjornsen2005, Male2012, Anderson2013,
  CollinsMale2014}; asymptotic non-commutative independence
between random matrix ensembles \cite{Voiculescu1991,
  Voiculescu1998, AndersonFarrell2014, MingoPopa2016}, and
central limit theorems for linear statistics
\cite{DiaconisShahshahani1994, Johansson1998,
  BaiSilverstein2004, AndersonZeitouni2006,
  PasturShcherbina2011}.

Another such phenomenon, which stems mainly from
combinatorial considerations, is the fact that many random
matrix ensembles have a second-order limit distribution
\cite{MingoSpeicher2006, MingoSniadySpeicher2007,
  CollinsMingoSniadySpeicher2007}. We say that a random
matrix ensemble $(X_N)_{N\in\mathbb{N}}$ has a second-order
limit distribution if
\begin{itemize}

\item[i)] For all $m,n\in\mathbb{N}$, the following limits
  exist

\begin{equation*}
\alpha_n = \lim_{N\to\infty} \frac{1}{N}
\mathbb{E}(\textnormal{Tr}(X_N^n)) \quad \text{ and } \quad
\alpha_{m,n} = \lim_{N\to\infty}
\textnormal{Cov}(\textnormal{Tr}(X_N^m),\textnormal{Tr}(X_N^n)),
\end{equation*}
where $\textnormal{Cov}(X,Y) = \mathbb{E}(XY) -
\mathbb{E}(X)\mathbb{E}(Y)$;

\item[ii)]
For all $r\geq3$ and all
  $n_1,\ldots,n_r\in\mathbb{N}$,
\begin{equation*}
\lim_{N\to\infty} k_r\left(\textnormal{Tr} (X_N^{n_1}),
\ldots, \textnormal{Tr} (X_N^{n_r})\right) = 0,
\end{equation*}
where $k_r$ denotes the classical cumulant of order $r$.

\end{itemize}
In this paper we propose a new approach to the central limit
theorem (CLT) for continuously differentiable linear
statistics which is based on three other phenomena: a weaker
form of a large deviation principle for the operator norm; a
Poincar\'{e}-type inequality for the linear statistics; and
the existence of a second-order limit distribution. The
first two phenomena ensure the existence of the limiting
covariance of linear statistics, while the latter leads to
their asymptotic Gaussianity. Apart from making explicit the
relations between different fundamental phenomena, this
approach frames into a single setting many known random
matrix ensembles. As a consequence, classical central limit
theorems for linear statistics are recovered and new ones
are established, e.g., the CLT for the continuously
differentiable linear statistics of block Gaussian matrices.

Let $\lambda_1,\ldots,\lambda_N$ be the eigenvalues of an
$N\times N$ random matrix $X_N$. Given a function
$f:\mathbb{C}\to\mathbb{C}$, the (random) quantity
$\textnormal{Tr}(f(X_N)) = \sum_k f(\lambda_k)$ is called a
linear statistic of $X_N$. Hence, a CLT for the linear
statistics of a random matrix ensemble is a result that
establishes that, as $N\to\infty$,
\begin{equation*}
	\textnormal{Tr}(f(X_N)) -
        \mathbb{E}(\textnormal{Tr}(f(X_N))) \Rightarrow
        \mathcal{N}_{\mathbb{R}}(0,\sigma_f^2),
\end{equation*}
for some $\sigma_f^2>0$, where $\Rightarrow$
  denotes convergence in distribution and
  $\mathcal{N}_{\mathbb{R}}(\mu,\sigma^{2})$ denotes the
  real Gaussian distribution with mean $\mu$ and variance
  $\sigma^{2}$. If the family of functions $f$ for which
such a CLT holds is a vector space, then any $n$-tuple of
(centered) linear statistics is, in the limit, jointly
Gaussian. As a result, the covariance mapping
\begin{equation}
\label{eq:Introrho}
	\langle f,g\rangle \mapsto \lim_{N\to\infty}
        \textnormal{Cov}(\textnormal{Tr}(f(X_N)),\textnormal{Tr}(g(X_N)))
\end{equation}
plays a privileged role in the description of the asymptotic
behavior of the linear statistics. Indeed, the so-called
second-order non-commutative probability spaces capture this
covariance mapping in a bilinear functional, which is not
present in typical non-commutative probability spaces.

A second-order non-commutative probability space is a triple
$(\mathcal{A},\varphi, \ab\rho)$ consisting of a unital
algebra $\mathcal{A}$, a unital linear functional
$\varphi:\mathcal{A}\to\mathbb{C}$ which is tracial, and a
bilinear functional
$\rho:\mathcal{A}\times\mathcal{A}\to\mathbb{C}$ which is
tracial in both arguments and $\rho(1,a)=\rho(a,1)=0$ for
all $a\in\mathcal{A}$. The canonical example in the single
random matrix setting is the following. Let $(X_N)_N$ be a
self-adjoint random matrix ensemble. Define
$\mathcal{A}=\mathbb{C}[x]$ and, for all
$p,q\in\mathcal{A}$,
\begin{align*}
	\varphi(p) &= \lim_{N\to\infty} \frac{1}{N}
        \mathbb{E}(\textnormal{Tr}(p(X_N))) \quad \text{ and
        } \\ \rho(p,q) &= \lim_{N\to\infty}
        \textnormal{Cov}(\textnormal{Tr}(p(X_N)),\textnormal{Tr}(q(X_N))).
\end{align*} 
We will assume that $\cA = C([-M, M])$ and that the linear
functional $\varphi$ is continuous with respect to the
supremum norm on $C([-M,M])$, the space of continuous
functions on $[-M, M]$, for some $M>0$. However, it was
observed that in some canonical examples, the bilinear
functional $\rho$ is not continuous (bounded) with respect
to the the supremum norm on $C([-M,M])$ for any
$M>0$. (Theorem~\ref{Theorem:BilinearFunctional} below shows
that in fact this is always the case.) The unboundedness of
$\rho$ makes the usual analytic setting from free
probability theory unfitted for the second-order case. In
addition to the CLT for linear statistics, our main results
contribute to the understanding of the analytic structure of
second-order non-commutative probability spaces. On the one
hand, they pinpoint the source of the unbounded nature of
the bilinear functional $\rho$; on the other hand, they lead
to a general archetype for the integral representation of
the second-order Cauchy transform, i.e., the generating
function given by
\begin{equation}
\label{eq:IntroSOCauchyTransform}
	G_2(z,w) = \sum_{m,n\geq 1}
        \frac{\alpha_{m,n}}{z^{m+1}w^{n+1}}.
\end{equation}
Furthermore, we establish that the covariance of resolvents
converges to this transform (see Corollary
\ref{corollary:resolvents}) and that the limiting covariance
of analytic linear statistics can be expressed as a contour
integral depending on the second order Cauchy transform, see
Theorem \ref{Theorem:FluctuationsLinearStatistics}. Since
the second-order Cauchy transform of block Gaussian matrices
was recently found in \cite{diaz2020global}, these results
provide an effective way to compute the covariance of
analytic linear statistics of block Gaussian matrices.

The organization of this paper is as follows. We
present some preliminaries in the following
  section. We precisely state our main results on the CLT
  for the linear statistics of some random matrices in
  Section~\ref{Section:CLTLinearStatistics}, and provide
  their proofs in Section~\ref{Section:DeferredProofs}. In
Section~\ref{Section:Examples}, we provide some examples of
random matrix ensembles for which our main results apply. In
particular, we show that block Gaussian matrices fall within
the scope of our work. We introduce the
  definition of analytic second-order non-commutative
  probability space and discuss its implications in
  Section~\ref{Section:AnalyticsSONCPS}. We finish this
paper with some concluding remarks in
Section~\ref{Section:ConcludingRemarks}.

\section{Preliminaries}

\begin{notation}\label{notation:our_space_of_functions}
We let $C^1(\bR)$ denote the space of complex valued
functions on $\bR$ which have a continuous derivative. We
denote by $f\vert_{M}$ the restriction of
$f:\mathbb{R}\to\mathbb{C}$ to the interval $[-M,M]$ and we
let $C^1([-M, M]) = \{ f|_M \mid f \in C^1(\bR) \}$. For
$f\in C^1([-M,M])$, we let $\centre{f}(x) = f(x) - f(0)$ and
$C^1([-M, M])^\circ = \{ \centre f \mid f \in C^1([-M, M])
\}$. Then we have a direct sum decomposition
\[
C^1([-M, M]) = \bC \oplus C^1([-M, M])^\circ
\] 
and let $\| f \| = |f(0)| + \| f' \|_M$ where $\| f \|_M =
\sup_{|x| \leq M} |f(x)|$. This is a norm on $C^1([-M, M])$
and in this norm $C^1([-M, M])$ is a Banach space.
\end{notation}

\subsection{Fr\'{e}chet Representation Theorem for Bilinear Functionals}
\label{Section:FrechetIntegral}

Given a function $u:[-M,M]^{2}\to\mathbb{R}$ and
$s_{0},s_{1},t_{0},t_{1}\in[-M,M]$, we let
\begin{equation*}
	\Delta u (s_{0},s_{1};t_{0},t_{1}) = u(s_{1},t_{1})
        - u(s_{1},t_{0}) - u(s_{0},t_{1}) + u(s_{0},t_{0}).
\end{equation*}

\begin{definition}
We say $u:[-M,M]^{2}\to\mathbb{R}$ has \textit{bounded
  Fr\'{e}chet variation} if
\begin{equation*}
	\sum_{i=1}^{m} \sum_{j=1}^{n} \sigma_{i} \theta_{j}
        \Delta u(s_{i-1},s_{i};t_{j-1},t_{j})
\end{equation*}
is uniformly bounded for all $m,n\in\mathbb{N}$,
$\sigma_{i},\theta_{j}\in\{\pm1\}$, for all $ -M=s_{0} \leq
s_{1} \leq \cdots \leq s_{m} = M$, and all $ -M=t_{0} \leq
t_{1} \leq \cdots \leq t_{n} = M$.  If $u:[-M,M]^{2}\to\bC$
has real and imaginary parts which are of bounded Fr\'echet
variation, then we say that the complex valued function has
bounded Fr\'echet variation.
\end{definition}

\begin{definition}\label{definition:bounded_bilinear}
Let $\phi : C([-M, M])^2 \rightarrow \bC$ be a bilinear
function; we say that $\Phi$ is \textit{bounded} if
$\exists\, K > 0$ such that for all $f, g \in C([-M, M])$ we
have $|\Phi(f, g)| \leq K \| f \|_M\, \| g \|_M$, where
$\|f\|_M = \sup \{ |f(x)| : x\in[-M,M]\}$.
\end{definition}

The two results of Fr\'echet we need can be found in
\cite[Ch. III \S7 and \S11]{Hildebrandt1963} and \cite[\S
  6]{Frechet1915}. See also \cite{MorseTransue1949a} and
\cite{MorseTransue1949b}. For a partition $s =
  (s_{0},s_{1},\ldots,s_{n})$ of $[-M,M]$, i.e., $-M = s_{0}
  \leq s_{1} \leq \cdots \leq s_{n} = M$, we let $\lvert s
  \rvert = \max \{s_{i} - s_{i-1} : 1 \leq i \leq n\}$.

\begin{theorem}[Fr\'{e}chet]
If $u:[-M,M]^{2}\to\mathbb{R}$ has bounded Fr\'{e}chet
variation, then for all $f, g \in C([-M,M])$, the following
limit exists
\begin{equation*}
\Phi(f,g) := \lim_{\lvert s \rvert, \lvert t \rvert \to 0}
\sum_{i} \sum_{j} f(\xi_{i}) g(\eta_{j}) \Delta
u(s_{i-1},s_{i};t_{j-1},t_{j}),
\end{equation*}
where $\xi_{i} \in [s_{i-1}, s_i]$ and $\eta_j \in [t_{j-1},
  t_j]$.  Moreover there is $K \geq 0$ such that
$|\Phi(f,g)| \leq K \lVert f \rVert_M \lVert g \rVert_M$.
\end{theorem}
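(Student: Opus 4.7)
The plan is to proceed in three stages. First, establish a key extremal estimate bounding arbitrary signed double sums in terms of the Fr\'echet variation $V(u)$, defined as the supremum appearing in the definition of bounded Fr\'echet variation. Second, use this estimate together with uniform continuity of $f$ and $g$ on $[-M,M]$ to show that the Riemann sums $S(\Pi) := \sum_{i,j} f(\xi_{i}) g(\eta_{j}) \Delta u(s_{i-1},s_{i};t_{j-1},t_{j})$ form a Cauchy net as the meshes $\lvert s\rvert, \lvert t\rvert \to 0$. Third, extract the norm bound on $\Phi$ by passing to the limit.

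The key estimate asserts that for any partitions of $[-M,M]$ and any real coefficients $a_{1},\ldots,a_{m}$ and $b_{1},\ldots,b_{n}$,
\begin{equation*}
  \left\lvert \sum_{i=1}^{m}\sum_{j=1}^{n} a_{i} b_{j}\, \Delta u(s_{i-1},s_{i};t_{j-1},t_{j}) \right\rvert \leq \max_{i}\lvert a_{i}\rvert \cdot \max_{j}\lvert b_{j}\rvert \cdot V(u).
\end{equation*}
After rescaling it suffices to treat $\lvert a_{i}\rvert,\lvert b_{j}\rvert\leq 1$. The left-hand side is bilinear in $(a,b)\in[-1,1]^{m}\times[-1,1]^{n}$, so maximizing first in $a$ with $b$ fixed (a linear function on a cube attains its extremum at a vertex) and then in $b$ reduces the supremum to the case $a_{i},b_{j}\in\{\pm 1\}$, where it is at most $V(u)$ by hypothesis. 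In particular, $\lvert S(\Pi)\rvert \leq \|f\|_{M}\|g\|_{M}V(u)$ for every tagged partition $\Pi$.

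For the Cauchy property, given two tagged partitions $\Pi_{1},\Pi_{2}$, pass to their common refinement $\Pi^{*}$ in both coordinates. The additivity identity $\Delta u(s_{l-1},s_{l};t_{m-1},t_{m}) = \sum \Delta u^{*}$ (summed over the subrectangles of $\Pi^{*}$ refining $(l,m)$, which follows by an immediate telescoping of the defining four-term expression) lets me rewrite each $S(\Pi_{k})$ as a double sum indexed by the subrectangles of $\Pi^{*}$. Then $S(\Pi^{*}) - S(\Pi_{k}) = \sum c_{ij}^{(k)}\Delta u^{*}_{ij}$ with $c_{ij}^{(k)} = f(\xi^{*}_{i})g(\eta^{*}_{j}) - f(\xi^{(k)})g(\eta^{(k)})$, where $(\xi^{(k)},\eta^{(k)})$ is the tag of the $\Pi_{k}$-rectangle containing subrectangle $(i,j)$. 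By uniform continuity of $f$ and $g$ on $[-M,M]$, given $\epsilon>0$, once the meshes of $\Pi_{1},\Pi_{2}$ fall below a suitable $\delta$, one obtains $\lvert c_{ij}^{(k)}\rvert \leq (\|f\|_{M}+\|g\|_{M})\epsilon$ via the standard splitting $f(\xi^{*})g(\eta^{*})-f(\xi)g(\eta) = f(\xi^{*})\bigl(g(\eta^{*})-g(\eta)\bigr) + g(\eta)\bigl(f(\xi^{*})-f(\xi)\bigr)$. Applying the key estimate on $\Pi^{*}$ yields $\lvert S(\Pi^{*}) - S(\Pi_{k})\rvert \leq (\|f\|_{M}+\|g\|_{M})\epsilon\, V(u)$. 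The triangle inequality makes $\{S(\Pi)\}$ a Cauchy net, so the limit $\Phi(f,g)$ exists and is independent of the intermediate points; passing to the limit in the bound from Step 1 delivers $\lvert\Phi(f,g)\rvert\leq V(u)\|f\|_{M}\|g\|_{M}$, so $K := V(u)$ works.

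The main obstacle I anticipate is the book-keeping in the refinement step: both coordinates must be refined simultaneously, and $S(\Pi^{*})-S(\Pi_{k})$ must be rearranged so that exactly one coefficient $c_{ij}^{(k)}$ multiplies each $\Delta u^{*}_{ij}$ on $\Pi^{*}$, which is what permits the key estimate to be invoked with a small maximum coefficient. Once this rearrangement is in place, the proof is essentially the classical passage from Riemann--Stieltjes sums to the Stieltjes integral, with the extreme-point reduction of Step 1 replacing the total-variation bound that would be used in the single-variable setting.
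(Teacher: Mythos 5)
The paper does not prove this theorem; it is quoted from Fr\'echet's original work and Hildebrandt's book (\cite{Frechet1915,Hildebrandt1963}; see also \cite{MorseTransue1949a,MorseTransue1949b}), so there is no in-paper proof to compare against. Your reconstruction follows the classical route, and the key ideas are all present: the vertex-maximization lemma on the bicube $[-1,1]^m\times[-1,1]^n$ reducing arbitrary product coefficients to signs, the telescoping additivity of $\Delta u$ under common refinement, and the tag-splitting identity.

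One step, as phrased, does not follow, and it is worth flagging because it is exactly the point that separates bounded Fr\'echet variation from bounded Vitali variation. You first establish the pointwise bound $\lvert c^{(k)}_{ij}\rvert \leq (\|f\|_{M}+\|g\|_{M})\epsilon$ and then write ``applying the key estimate on $\Pi^*$ yields $\lvert S(\Pi^*) - S(\Pi_k)\rvert \leq (\|f\|_{M}+\|g\|_{M})\epsilon\, V(u)$.'' But your key estimate --- correctly --- applies only to coefficient arrays of product (rank-one) form $a_i b_j$, and $c^{(k)}_{ij}$ is not of that form; it is a \emph{sum} of two such arrays. Bounded Fr\'echet variation does not bound $\sum_{i,j} c_{ij}\,\Delta u^{*}_{ij}$ by $\max_{ij}\lvert c_{ij}\rvert$ for general $c_{ij}$; that stronger conclusion is Vitali variation, and assuming it here would beg the question. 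The repair is the one your splitting already sets up: because the common refinement is a product of two one-dimensional refinements, $l(i,j)=l(i)$ and $m(i,j)=m(j)$, so
\[
c^{(k)}_{ij} \;=\; f(\xi^{*}_{i})\bigl(g(\eta^{*}_{j})-g(\eta^{(k)}_{m(j)})\bigr) \;+\; \bigl(f(\xi^{*}_{i})-f(\xi^{(k)}_{l(i)})\bigr)\,g(\eta^{(k)}_{m(j)})
\]
exhibits $c^{(k)}_{ij}$ as a sum of two genuinely product-form arrays, the first with $\lvert a_i\rvert \leq \|f\|_M$, $\lvert b_j\rvert\leq\epsilon$, the second with $\lvert a_i\rvert\leq\epsilon$, $\lvert b_j\rvert\leq\|g\|_M$. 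Apply the key estimate to each of the two double sums separately and add; the bound $(\|f\|_M+\|g\|_M)\epsilon\,V(u)$ then follows, but the reasoning must route through this rank-one decomposition rather than a pointwise bound on $\lvert c^{(k)}_{ij}\rvert$. With that repair, and the routine reduction of complex-valued $f,g$ to real and imaginary parts (your vertex argument, as stated, is for real coefficients and $u$ is real-valued), the proof is correct.
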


Given its similarities with the Riemann-Stieltjes integral
for functions of bounded variation, the limit $\Phi(f,g)$ in
the previous theorem is often called the \emph{Fr\'{e}chet
integral} of $f$ and $g$ with respect to $u$ and it is
denoted by
\begin{equation*}
	\int_{-M}^{M} \int_{-M}^{M} f(x) g(y) \mathrm{d} u(x,y).
\end{equation*}
Observe that if $u$ has bounded variation, then it also has
bounded Fr\'{e}chet variation and, for continuous functions
$f$ and $g$, both the Riemann-Stieltjes and Fr\'{e}chet
integrals coincide. In words, the previous theorem
establishes that the Fr\'{e}chet integral exists and
describes a bounded bilinear functional. The following
theorem establishes that the converse is also true: a
bounded bilinear functional is the Fr\'{e}chet integral with
respect to some function of bounded Fr\'{e}chet variation.

\begin{theorem}[Fr\'{e}chet]\label{thm:frechet_II}
If $\Phi:C([-M,M])^{2}\to\mathbb{R}$ is a bounded bilinear
functional, then there exists a function
$u:[-M,M]^{2}\to\mathbb{R}$ of bounded Fr\'{e}chet variation
such that
\begin{equation*}
	\Phi(f,g) = \int_{-M}^{M} \int_{-M}^{M} f(x) g(y)\,
        \mathrm du(x,y).
\end{equation*}
\end{theorem}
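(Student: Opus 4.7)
The plan is to construct $u$ by two successive applications of the Riesz representation theorem, then verify that $u$ has bounded Fr\'echet variation and represents $\Phi$.

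First, for each fixed $g \in C([-M,M])$, the map $f \mapsto \Phi(f,g)$ is a bounded linear functional of norm at most $K \|g\|_M$, so the Riesz representation theorem yields a regular signed Borel measure $\mu_g$ with $\Phi(f,g) = \int f\, d\mu_g$ and $\|\mu_g\| \leq K \|g\|_M$. For each Borel set $A \subseteq [-M,M]$, the map $g \mapsto \mu_g(A)$ is linear and satisfies $|\mu_g(A)| \leq \|\mu_g\| \leq K\|g\|_M$, so applying Riesz once more gives a signed measure $\nu_A$ with $\mu_g(A) = \int g\, d\nu_A$ and $\|\nu_A\| \leq K$. I would then define $u(s,t) = \nu_{[-M,s]}([-M,t])$, with the convention $u(-M,t) = u(s,-M) = 0$. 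This construction implicitly extends $\Phi$ to a bilinear expression $\widetilde{\Phi}(\chi_A, \chi_B) := \nu_A(B)$ on pairs of characteristic functions, and by linearity to pairs of step functions.

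Second, for the bounded Fr\'echet variation of $u$, I would observe that for any partitions $-M = s_0 \leq \cdots \leq s_m = M$ and $-M = t_0 \leq \cdots \leq t_n = M$, the second difference computes to $\Delta u(s_{i-1},s_i;t_{j-1},t_j) = \nu_{(s_{i-1}, s_i]}((t_{j-1}, t_j]) = \widetilde{\Phi}\bigl(\chi_{(s_{i-1}, s_i]}, \chi_{(t_{j-1}, t_j]}\bigr)$. Therefore, for any signs $\sigma_i, \theta_j \in \{\pm 1\}$, the Fr\'echet sum equals $\widetilde{\Phi}(F,G)$ where $F = \sum_i \sigma_i \chi_{(s_{i-1}, s_i]}$ and $G = \sum_j \theta_j \chi_{(t_{j-1}, t_j]}$ are step functions of sup norm at most $1$. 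The uniform bound $|\widetilde{\Phi}(F,G)| \leq K$ follows from the fact that both Riesz constants are $K$, yielding the required uniform estimate independent of the partitions and signs.

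Third, for the integral representation itself, I would fix $f, g \in C([-M,M])$ and approximate them uniformly by step functions $f_n = \sum_i f(\xi_i)\chi_{(s_{i-1},s_i]}$ and $g_n = \sum_j g(\eta_j)\chi_{(t_{j-1},t_j]}$ on partitions of vanishing mesh. By uniform continuity, $\|f - f_n\|_M, \|g - g_n\|_M \to 0$; applying the identity $\widetilde{\Phi}(f_n, g_n) = \sum_{i,j} f(\xi_i) g(\eta_j) \Delta u(s_{i-1},s_i;t_{j-1},t_j)$ and passing to the limit, one side converges to $\Phi(f,g)$ (by joint continuity of $\Phi$ and a bound showing $\widetilde{\Phi}$ agrees with $\Phi$ on continuous approximants), while the other converges to $\int_{-M}^M \int_{-M}^M f(x) g(y)\, du(x,y)$ by the preceding Fr\'echet theorem.

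The main obstacle is making the intermediate extension $\widetilde{\Phi}$ fully consistent, i.e.\ verifying that $\nu_A(B) = \mu_{\chi_B}(A)$ (the two possible ways of evaluating at a pair of characteristic functions) and, relatedly, that $\widetilde{\Phi}(f_n, g) \to \Phi(f,g)$ for continuous $g$ when $f_n \to f$ uniformly through step functions. This requires a monotone class / dominated convergence argument using that the family of Borel sets $B$ for which $\nu_A(B) = \mu_{\chi_B}(A)$ holds for all $A$ is a $\sigma$-algebra containing the open sets, together with the total variation bound $\|\nu_A\| \leq K$ to control convergence.
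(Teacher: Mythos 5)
The paper does not prove this theorem; it simply cites it as a classical result of Fr\'echet (with pointers to Hildebrandt and Morse--Transue), so there is no internal proof to compare against. Your double-Riesz construction is in fact essentially the route in those references, and the overall architecture of your sketch---build $\nu_A$ by Riesz in $f$ then Riesz in $g$, set $u(s,t)=\nu_{[-M,s]}([-M,t])$, identify $\Delta u$ with $\nu_{(s_{i-1},s_i]}((t_{j-1},t_j])$, and pass to the Fr\'echet integral---is correct.

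There is, however, one step that is asserted too quickly and does not follow as stated. You claim that the uniform estimate $|\widetilde{\Phi}(F,G)| \leq K$ ``follows from the fact that both Riesz constants are $K$.'' But what the second Riesz application gives you is only $\|\nu_A\| \leq K$ for each individual Borel set $A$; if you write $\lambda := \sum_i \sigma_i \nu_{(s_{i-1},s_i]}$, the naive bound $\|\lambda\| \leq \sum_i \|\nu_{(s_{i-1},s_i]}\| \leq mK$ grows with the number of intervals, so the uniform bound is not immediate. What saves the argument is the identity (for continuous $g$)
\[
\int g\, \mathrm d\lambda \;=\; \sum_i \sigma_i\, \mu_g\bigl((s_{i-1},s_i]\bigr) \;=\; \int F\, \mathrm d\mu_g ,
\]
from which $|\int g\, \mathrm d\lambda| \leq \|F\|_\infty \|\mu_g\| \leq K\|F\|_\infty\|g\|_M$, hence $\|\lambda\| \leq K\|F\|_\infty = K$, and only then $|\widetilde{\Phi}(F,G)| = |\int G\, \mathrm d\lambda| \leq \|G\|_\infty\|\lambda\| \leq K$. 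In other words, you must re-apply the \emph{first} Riesz bound with the step function $F$ substituted for a continuous test function, not merely invoke $\|\nu_A\|\le K$ pointwise in $A$. Once this is spelled out, the ``consistency'' worry you flag at the end largely evaporates: you never need the object $\mu_{\chi_B}$, and the agreement of $\widetilde{\Phi}$ with $\Phi$ on continuous pairs is immediate from $\int g\,\mathrm d\nu_f = \int f\,\mathrm d\mu_g = \Phi(f,g)$, after which the joint $K$-boundedness of the extension lets you pass from step approximants $(f_n,g_n)$ to $(f,g)$ in the limit. (Two small technical points to tidy: your step functions built from $\chi_{(s_{i-1},s_i]}$ miss the point $-M$, so the first interval should be closed on the left to get $\|f-f_n\|_M\to 0$; and the ``convention'' $u(-M,t)=u(s,-M)=0$ is not automatic from the formula $u(s,t)=\nu_{[-M,s]}([-M,t])$, though since only $\Delta u$ enters the Fr\'echet integral this normalization is harmless.)
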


\section{CLT for the Linear Statistics of Some Random Matrices}
\label{Section:CLTLinearStatistics}

Let $(X_{N})_{N}$ be a random matrix ensemble. For all
$m,n\in\mathbb{N}$, we let
\begin{equation*}
	\alpha_{n} \coloneqq \lim_{N\to\infty} \frac{1}{N}
        \mathbb{E}\left(\textnormal{Tr}(X_{N}^{n})\right)
        \text{ and } \alpha_{m,n} \coloneqq
        \lim_{N\to\infty}
        \textnormal{Cov}\left(\textnormal{Tr}(X_{N}^{m}),\textnormal{Tr}(X_{N}^{n})\right).
\end{equation*}
The collections $(\alpha_n)_n$ and $(\alpha_{m,n})_{m,n}$
are the first and second-order moments of $(X_N)_N$,
respectively. In this paper we consider the following set of
assumptions.
\begin{itemize}
	\item[\textbf{A0.}] Both the first and second-order
          moments exist.
	\item[\textbf{A1.}] There exists $M>0$ such that $\displaystyle \lim_{N\to\infty} N^8 \Pr(\lVert X_N \rVert > M) = 0$.
	\item[\textbf{A2.}] There exists $K>0$ such that, for all $f\in C^1(\mathbb{R})$ and $N\in\mathbb{N}$,
	\begin{equation*}
		\textnormal{Var}\left(\textnormal{Tr}(f(X_N))\right)\leq K \|f'\|_\infty^2.
	\end{equation*}
\end{itemize}
Observe that A0 corresponds to Part i) in the definition of
second-order limit distribution. Also, observe that A1 is
weaker than a large deviation principle for the operator
norm. Finally, note that A2 is a matricial version of the
Poincar\'{e} inequality, see, e.g., Proposition~4.1 in
\cite{HaagerupThorbjornsen2005} and references therein. In
Section~\ref{Section:Examples} we show that these
assumptions are satisfied by some random matrix ensembles
which are common in the literature.

We define the set of smooth (test) functions $\mathcal{S}$ as
\begin{equation*}
	\mathcal{S} = \left\{f\in C^{1}(\mathbb{R}) : f
        \text{ is polynomially bounded}\right\}.
\end{equation*}
Note that assumption \az{} gives that we can define a
bi-linear map $\rho : \bC[x] \times \bC[x] \rightarrow \bC$
by linearly extending the definition $\rho(x^m, x^n) :=
\alpha_{m,n}$.

In the following theorem we show that under assumptions A0,
A1, and A2, $\rho$ extends to a bilinear function on
$C^1([-M, M])$, and the covariance of smooth linear
statistics converges, as $N \rightarrow \infty$, to $\rho$,
and is bounded by $K$, where the $K$ is that of assumption
A2.

This shows that the two ways one might define $\rho(f, g)$,
the first using fluctuation moments $(a)$
below, the second using linear statistics
$(b)$ below, agree. This is the main
technical part of the paper.

\begin{theorem}
\label{Theorem:BilinearFunctional}
If $(X_{N})_{N}$ is a self-adjoint random matrix ensemble
satisfying A0, A1, and A2, and $\rho$ the linear extension
of the second order moments in \az{} to $\bC[x]$, then,

\begin{enumerate}

\item\label{item:rho_extends} $\rho$ extends to a bi-linear
  function on $C^1([-M, M]) \times C^1([-M, M])$ satisfying
\begin{equation}\label{eq:BoundednessRho}
	|\rho(f,g)| \leq K \| f'\|_M \| g' \|_M;
\end{equation}

\item\label{item:covariance_limit}
for $f$ and $g$ in $\cS$
\begin{equation}\label{eq:covariance_limit}
	 \lim_{N\to\infty}
         \textnormal{Cov}(\textnormal{Tr}(f(X_N)),\textnormal{Tr}(g(X_N)))
         = \rho(f|_M, g|_M)
\end{equation}
where $f|_M$ denotes the restriction of $f$ to $[-M, M]$;

\item\label{item:integral_rep} there exists $u : [-M, M]^2
  \rightarrow \bR$, of bounded Fr\'{e}chet variation, such
  that
\[
\rho(f, g) 
=
\int_{-M}^{M} \int_{-M}^{M} f'(x) g'(y) \, \mathrm du(x,y),
\]
where the double integral is in the sense of Fr\'{e}chet as
discussed in Section~\ref{Section:FrechetIntegral}.
\end{enumerate}

\end{theorem}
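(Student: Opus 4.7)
The plan is to build $\rho$ on $C^{1}([-M, M])$ starting from the polynomial case (where it is defined by A0), extend by density using the bound in (a), and then read off (c) from Fr\'{e}chet's Theorem~\ref{thm:frechet_II}. The key device is a \emph{linear extension} $\hat{f}$ of any $f \in C^{1}([-M, M])$: set $\hat{f}(x) = f(x)$ for $x \in [-M, M]$, and extend affinely by $\hat{f}(x) = f(\pm M) + f'(\pm M)(x \mp M)$ for $\pm x > M$. Then $\hat{f} \in C^{1}(\bR) \cap \cS$ with $\|\hat{f}'\|_{\infty} = \|f'\|_{M}$, so A2 applied to $\hat{f}$ produces the crucial estimate $\mathrm{Var}(\mathrm{Tr}(\hat{f}(X_{N}))) \leq K\|f'\|_{M}^{2}$. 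Moreover $\mathrm{Tr}(\hat{f}(X_{N})) = \mathrm{Tr}(f(X_{N}))$ on the event $A_{N}^{c} := \{\|X_{N}\| \leq M\}$, so the two linear statistics differ only on the rare event $A_{N}$.

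To replace $\mathrm{Tr}(f(X_{N}))$ by $\mathrm{Tr}(\hat{f}(X_{N}))$ at negligible cost, I would combine A0 and A1 through the deterministic bound $\|X_{N}\|^{2k} \leq \mathrm{Tr}(X_{N}^{2k})$ (valid since $X_{N}$ is self-adjoint). With A0 this gives $\mathbb{E}[\|X_{N}\|^{2k}] = O(N)$ for each fixed $k$, and A1 gives $\Pr(A_{N}) = o(N^{-8})$; Cauchy--Schwarz then yields, for any polynomially bounded $f \in \cS$ of degree $D$,
\[
\mathbb{E}\bigl[(1+\|X_{N}\|)^{2D}\,\mathbf{1}_{A_{N}}\bigr] \leq \bigl(\mathbb{E}[(1+\|X_{N}\|)^{4D}]\bigr)^{1/2}\,\Pr(A_{N})^{1/2} = o(N^{-7/2}).
\]
Since $|\mathrm{Tr}(f(X_{N}))|^{2} \leq C_{f}\,N^{2}(1+\|X_{N}\|)^{2D}$, every rare-event contribution to the expansion of $\mathrm{Cov}(\mathrm{Tr}(f(X_{N})),\mathrm{Tr}(g(X_{N})))$ is $o(1)$, hence this covariance equals $\mathrm{Cov}(\mathrm{Tr}(\hat{f}(X_{N})),\mathrm{Tr}(\hat{g}(X_{N}))) + o(1)$ as $N \to \infty$.

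With this in hand the three parts fall into place. For polynomials $p, q$, A2 applied to $\hat{p}$ together with the tail bound and A0 gives $\rho(p, p) = \lim_{N} \mathrm{Var}(\mathrm{Tr}(p(X_{N}))) \leq K\|p'\|_{M}^{2}$, and Cauchy--Schwarz for the bilinear form $\rho$ yields $|\rho(p, q)| \leq K\|p'\|_{M}\|q'\|_{M}$. Since polynomials are dense in $C^{1}([-M, M])$ in the norm $\|f\| = |f(0)| + \|f'\|_{M}$, $\rho$ extends uniquely to a bounded bilinear form on $C^{1}([-M, M])^{2}$ satisfying~\eqref{eq:BoundednessRho}, proving (a). For (b), given $f, g \in \cS$ and polynomials $p_{n} \to f|_{M}$, $q_{n} \to g|_{M}$ in $C^{1}$-norm, A2 applied to $\hat{f} - \hat{p}_{n}$ bounds $\mathrm{Var}(\mathrm{Tr}((\hat{f}-\hat{p}_{n})(X_{N})))$ uniformly in $N$ by $K\|\hat{f}' - \hat{p}_{n}'\|_{\infty}^{2}$, and a triangle inequality against the polynomial case proves~\eqref{eq:covariance_limit}. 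Finally, $\rho(1, g) = 0$ since constants have zero covariance, so $\rho(f, g) = \rho(\centre{f}, \centre{g})$ depends only on $f'$ and $g'$; the bilinear form $\Phi(F, G) := \rho(f_{F}, g_{G})$ with $f_{F}(x) = \int_{0}^{x} F(s)\,ds$ is bounded on $C([-M, M])^{2}$, so Theorem~\ref{thm:frechet_II} delivers a function $u$ of bounded Fr\'{e}chet variation representing $\rho$, which is (c). The main technical hurdle is the tail estimate of the second paragraph: the $N^{8}$ in A1 is exactly what is needed to absorb the $N^{2}$ from products of traces against the $O(\sqrt{N})$ moment of $\|X_{N}\|^{D}$, and carefully tracking this balance through each term of the covariance expansion is the delicate point of the argument.
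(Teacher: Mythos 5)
Your proposal follows the same high-level skeleton as the paper (bound $\rho$ on polynomials, extend by density in the $C^1$-norm, obtain part (c) from Fr\'echet's theorem and the antiderivative trick), but the technical core that controls the rare event $A_N = \{\|X_N\| > M\}$ is genuinely different, and it works. The paper replaces a polynomially bounded $f$ by an exponentially damped \emph{bounded} extension $\tilde f$, then compares $\rho_N(f,g)$ to $\rho_N(f_M,g_M)$ and $\rho_N(\tilde f, \tilde g)$ through a truncation lemma (Lemma~\ref{Lemma:Truncation}) whose constants are bounded using A0; all estimates flow through $\varphi_N(|f|^{2})$ and $\varphi_N(|f|^{4})$. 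You instead use the simpler affine extension $\hat f$ — unbounded but of degree $1$, with $\|\hat f'\|_\infty = \|f'\|_M$ — and absorb the rare-event contribution by the observation that self-adjointness gives $\|X_N\|^{2k} \le \Tr(X_N^{2k})$, so A0 yields $\E[\|X_N\|^{2k}] = O(N)$ and then Cauchy–Schwarz against A1 gives $\E[(1+\|X_N\|)^{2D}\mathbbm{1}_{A_N}] = o(N^{-7/2})$, which beats the $O(N^2)$ size of a product of traces. Your route is shorter and makes the role of the exponent $8$ in A1 transparent; the paper's route is longer but produces Lemma~\ref{Lemma:Truncation} as a reusable tool that it invokes again in the proofs of Theorems~\ref{Theorem:ConvergenceSOCauchyTransform} and \ref{Theorem:FluctuationsLinearStatistics}, so you would have to redo your moment estimate there.

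One step needs tightening: you write that ``Cauchy–Schwarz for the bilinear form $\rho$'' gives $|\rho(p,q)| \le K\|p'\|_M\|q'\|_M$ from the diagonal bound $\rho(p,p) \le K\|p'\|_M^2$. Since $\rho$ is \emph{bilinear} (the paper's $\mathrm{Cov}(X,Y) = \E(XY) - \E X\,\E Y$, not sesquilinear), for complex polynomials $p$ the quantity $\rho(p,p)$ need not be real, let alone nonnegative, so this Cauchy–Schwarz is not directly available and a naive real-imaginary decomposition loses a constant. The clean fix, which is what the paper implicitly does when it observes that A2 is equivalent to $|\rho_N(f,g)| \le K\|f'\|_\infty\|g'\|_\infty$, is to apply the $L^2$ Cauchy–Schwarz inequality at the $\rho_N$ level, writing $\rho_N(f,g) = \E\bigl[(T_f - \E T_f)(T_g - \E T_g)\bigr]$ with $T_h = \Tr(h(X_N))$ and bounding by the product of the (sesquilinear) standard deviations, each controlled by A2. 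With that substitution your argument gives the stated constant $K$ and handles complex $p$, $q$ without further ado.

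Two tiny cosmetic points: in part (c) you presumably mean $\Phi(F,G) := \rho(f_F, f_G)$ rather than $g_G$; and when you expand $(1+\|X_N\|)^{4D}$ you need $\E[\|X_N\|^j] = O(N)$ also for odd $j$, which follows by interpolating against the neighboring even power, but is worth a line.
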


By $(a)$, $\rho: C^1([-M, M]) \times C^1([-M, M])
\to\mathbb{C}$ is a well-defined bounded bilinear functional
which satisfies an asymptotic version of A2.  Indeed,
observe that A2 is equivalent to requiring that, for all
$f,g\in C^1(\mathbb{R})$ and $N\in\mathbb{N}$,
\begin{equation*}
	\lvert\textnormal{Cov}(\textnormal{Tr}(f(X_N)),\textnormal{Tr}(g(X_N)))\rvert
        \leq K \|f'\|_\infty \|g'\|_\infty.
\end{equation*}
In the presence of \ao{} we get the stronger statement
$(b)$.  The boundedness of $\rho$,
as established in \eqref{eq:BoundednessRho}, motivates
Definition \ref{def:soncps} of an analytic second-order NCPS
in Section~\ref{Section:AnalyticsSONCPS}.

\begin{notation}\label{notation:resolvents}
For each $z\in\mathbb{C}\setminus\mathbb{R}$, let
$r_{z}:\mathbb{R}\to\mathbb{C}$ be the function defined by
\begin{equation*}
	r_{z}(x) = \frac{1}{z-x}.
\end{equation*}
\end{notation}

Observe that $r_{z}$ is a smooth function, i.e.,
$r_{z}\in\mathcal{S}$. Therefore,
Theorem~\ref{Theorem:BilinearFunctional} implies that
$\rho(r_{z},r_{w})$ is well-defined for every
$z,w\in\mathbb{C}\setminus\mathbb{R}$ and gives us the
following corollary.

\begin{corollary}\label{corollary:resolvents}
For every $z, w \in \mathbb{C}\setminus\mathbb{R}$,
\begin{equation*}
	\rho(r_{z},r_{w}) = \lim_{N\to\infty}
        \textnormal{Cov}\left(\textnormal{Tr}((z-X_N)^{-1}),\textnormal{Tr}((w-X_N)^{-1})\right).
\end{equation*}
\end{corollary}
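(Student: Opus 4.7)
The plan is to read off the corollary as a direct specialization of Theorem~\ref{Theorem:BilinearFunctional}$(b)$ once one verifies that the resolvent functions $r_z$ belong to the class $\mathcal{S}$ of admissible test functions.

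First I would check that $r_z \in \cS$ for each $z \in \bC \setminus \bR$. Since $z \notin \bR$, we have $z - x \neq 0$ for every $x \in \bR$, so $r_z(x) = (z-x)^{-1}$ is in fact $C^\infty(\bR)$ and hence certainly in $C^1(\bR)$. Moreover $|r_z(x)| \leq |\mathrm{Im}(z)|^{-1}$ uniformly in $x \in \bR$, so $r_z$ is bounded and in particular polynomially bounded. Thus $r_z \in \cS$, and the analogous argument gives $r_w \in \cS$.

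Next I would invoke the functional calculus for the self-adjoint matrix $X_N$ to identify
\[
\Tr(r_z(X_N)) = \Tr((z - X_N)^{-1}),
\]
and similarly for $w$. With this identification, applying Theorem~\ref{Theorem:BilinearFunctional}$(b)$ to the pair $(r_z, r_w) \in \cS \times \cS$ yields
\[
\lim_{N \to \infty} \mathrm{Cov}\bigl(\Tr((z - X_N)^{-1}), \Tr((w - X_N)^{-1})\bigr) = \rho(r_z|_M, r_w|_M).
\]
Finally, the right-hand side is precisely what was meant by $\rho(r_z, r_w)$ in the statement: by part~$(a)$ of the same theorem, $\rho$ extends to a bounded bilinear functional on $C^1([-M,M]) \times C^1([-M,M])$, and we evaluate it on the restrictions $r_z|_M, r_w|_M \in C^1([-M, M])$.

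Since the entire proof is a one-line deduction from the main theorem once $r_z \in \cS$ is verified, there is no real obstacle; the only thing worth flagging is the slight notational abuse whereby $\rho(r_z, r_w)$ on the left refers to the extension evaluated at the restrictions to $[-M, M]$, while $\Tr((z - X_N)^{-1})$ on the right is computed from $X_N$, whose spectrum lies in $[-M, M]$ with probability tending to $1$ sufficiently fast by assumption \ao{}.
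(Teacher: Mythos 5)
Your proposal is correct and follows exactly the paper's route: one verifies $r_z \in \cS$ (smooth because $z - x \neq 0$ for $x \in \bR$, and bounded by $|\mathrm{Im}(z)|^{-1}$ hence polynomially bounded), and then Corollary~\ref{corollary:resolvents} is a direct application of Theorem~\ref{Theorem:BilinearFunctional}$(b)$. The closing remark about $\rho(r_z, r_w)$ abbreviating $\rho(r_z|_M, r_w|_M)$ matches the paper's implicit convention as well.
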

Since $\alpha_{m,n} = \rho(x^{m},x^{n})$ for all $m,n\geq1$,
\eqref{eq:BoundednessRho} readily implies that
\begin{equation*}
	|\alpha_{m,n}| \leq KmnM^{m+n-2}.
\end{equation*}
Thus the power series
\begin{equation*}
	 \sum_{m,n\geq 1} \frac{\alpha_{m,n}}{z^{m+1}w^{n+1}}
\end{equation*}
determines an analytic function on $\{(z,w)\in\mathbb{C}^2 : |z|,|w|>M\}$.

\begin{definition}\label{def:G_2}
Let $z, w \in \bC$ with $|z|, |w| > M$.We let
\begin{equation}\label{eq:def_G_2}
G_2(z, w) = \sum_{m,n \geq 1} \frac{\alpha_{m,m}}{z^{m+1} w^{n + 1}} 
\end{equation}
We call $G_2$ the \textit{second order Cauchy transform} of
the moment sequence $\{\alpha_{m,m}\}_{m, n \geq 1}$. $G_2$
is analytic on $\{(z,w)\in\mathbb{C}^2 : |z|,|w|>M\}$.
\end{definition}

The next theorem shows that $G_2$ can be analytically
extended to $(\mathbb{C}\setminus[-M,M])^2$ and that
$G_{2}(z,w)$ coincides with $\rho(r_{z},r_{w})$ for
$z,w\in\mathbb{C}\setminus\mathbb{R}$. As a result, the
following theorem establishes a connection between the limit
of the covariance of resolvents and the second-order Cauchy
transform.

\begin{theorem}
\label{Theorem:ConvergenceSOCauchyTransform}
If $(X_N)_N$ is a self-adjoint random matrix ensemble
satisfying A0, A1, and A2, then $G_2$ can be analytically
extended to $(\mathbb{C}\setminus[-M,M])^2$ and, for all
$z,w\in\mathbb{C}\setminus\mathbb{R}$,
\begin{equation*}
	G_2(z,w) = \rho(r_{z},r_{w}).
\end{equation*}
\end{theorem}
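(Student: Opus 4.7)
The plan is to define the candidate extension
\[
F(z,w) := \rho(r_z, r_w), \qquad z,w \in \mathbb{C}\setminus[-M,M],
\]
using Theorem~\ref{Theorem:BilinearFunctional} to ensure that $\rho$ is a bounded bilinear functional on $C^1([-M,M])$ and that $r_z \in C^1([-M,M])$ whenever $z \notin [-M,M]$. I would then show that $F$ is jointly holomorphic on $(\mathbb{C}\setminus[-M,M])^2$ and that $F = G_2$ on $\{|z|,|w|>M\}$; together these immediately give both the analytic extension and the formula $G_2(z,w) = \rho(r_z, r_w)$.

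For joint analyticity, the key point is that $z \mapsto r_z$ is a holomorphic Banach-space-valued map into $(C^1([-M,M]), \|\cdot\|)$. A direct computation gives
\[
\frac{r_{z+h}(x) - r_z(x)}{h} + r_z(x)^2 = \frac{h}{(z-x)^2 (z+h-x)},
\]
and for $|h| < \mathrm{dist}(z,[-M,M])/2$ both this function and its $x$-derivative are $O(|h|)$ uniformly on $[-M,M]$, so the difference quotient converges in $C^1$ norm to $-r_z^2$. Combined with the boundedness and bilinearity of $\rho$, this shows that $F$ is separately, and hence jointly, analytic.

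For the identification with $G_2$, I would expand
\[
r_z = \sum_{m=0}^{\infty} \frac{x^m}{z^{m+1}},
\]
and observe that, with the norm $\|f\| = |f(0)|+\|f'\|_M$, one has $\|x^m\| \leq 1+mM^{m-1}$, so the series converges in $C^1([-M,M])$ precisely when $|z|>M$; similarly for $r_w$. Continuity of $\rho$ then permits term-by-term evaluation:
\[
F(z,w) = \sum_{m,n \geq 0} \frac{\rho(x^m,x^n)}{z^{m+1} w^{n+1}}.
\]
The terms with $m=0$ or $n=0$ vanish, since $\rho(1,\cdot)=\rho(\cdot,1)=0$ — a defining property of second-order NCPS reflecting that $\mathrm{Tr}(I_N)=N$ is deterministic, so that $\alpha_{0,n}=\alpha_{m,0}=0$. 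The remaining double sum equals $G_2(z,w)$ on $\{|z|,|w|>M\}$, as required.

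The main technical obstacle is carrying out both expansions in the $C^1$ norm rather than merely the supremum norm: one must control derivatives of $r_z$ (for the Taylor expansion in $z$) and of $x^m$ (for the Laurent expansion in $1/z$), which is where the specific form $\|f\| = |f(0)|+\|f'\|_M$ plays its role. Once these norm estimates are in place, the remainder of the argument reduces to standard manipulations with absolutely convergent series against the bounded bilinear functional $\rho$.
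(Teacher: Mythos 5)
Your proof is correct, but it takes a genuinely different route from the paper's. The paper works at the finite-$N$ level: it introduces the truncated resolvent covariances $G_{2,M}^{(N)}(z,w) = \rho_N(r_{z,M}, r_{w,M})$, shows they are analytic and locally bounded uniformly in $N$, deduces (via Montel's theorem plus pointwise convergence of the Laurent coefficients on $\{|z|,|w|>M\}$) that they converge locally uniformly on $(\bC\setminus[-M,M])^2$ to an analytic extension of $G_2$, and finally uses the truncation Lemma~\ref{Lemma:Truncation} to pass from $G_{2,M}^{(N)}$ to $G_2^{(N)} = \rho_N(r_z,r_w)$, identifying the limit with $\rho(r_z,r_w)$ via part (b) of Theorem~\ref{Theorem:BilinearFunctional}. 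You instead bypass the finite-$N$ machinery entirely, using only part (a) of Theorem~\ref{Theorem:BilinearFunctional}: you show $z\mapsto r_z|_M$ is a holomorphic Banach-valued map into $C^1([-M,M])$, which makes $\rho(r_z,r_w)$ jointly analytic at once, and you expand $r_z$ in the $C^1$-norm Laurent series to match the coefficients with $\alpha_{m,n}$, with the $m=0,n=0$ terms dropping out because $\rho$ was constructed to vanish on constants. Your approach is in the same spirit as the abstract Proposition in Section~\ref{Section:AnalyticsSONCPS} (which the paper notes generalizes this theorem), and is arguably cleaner for proving Theorem~\ref{Theorem:ConvergenceSOCauchyTransform} on its own; what the paper's longer route buys is the uniform-on-compacts convergence of $G_{2,M}^{(N)}$ to the extended $G_2$, which is then reused directly in the proof of Theorem~\ref{Theorem:FluctuationsLinearStatistics}, so if you wanted to prove that theorem next you would need to re-establish that convergence.
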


In \cite{diaz2020global}, Diaz et al.\ recently found a
formula for the second-order Cauchy transform of block
Gaussian matrices. Specifically, they derive a formula at
the level of formal expressions and then extend it to the
analytic level. Since block Gaussian matrices satisfy A0,
A1, and A2 (see
Section~\ref{Section:ExampleBlockGaussianMatrices} below),
Theorem~\ref{Theorem:ConvergenceSOCauchyTransform} implies
that their formula for the second-order Cauchy transform is
indeed equal to the limit of the covariance of resolvents.

The next theorem provides a formula for the limit of the
covariance of certain functions in terms of the second-order
Cauchy transform. Given the tools available to compute the
latter, the following theorem provides an effective way to
evaluate the asymptotic covariance mapping $\rho$.

\begin{theorem}
\label{Theorem:FluctuationsLinearStatistics}
Assume that $f,g\in\mathcal{S}$ satisfy that $f\vert_{M}$
and $g\vert_{M}$ extend analytically to a complex domain
$\Omega \supset [-M,M]$. If $(X_N)_N$ is a self-adjoint
random matrix ensemble satisfying A0, A1, and A2, then
\begin{equation}
\label{eq:ConvergenceLinearStatistics}
	\rho(f,g) = \frac{1}{(2\pi i)^2} \int_{\mathcal{C}}
        \int_{\mathcal{C}} f(z) g(w) G_{2}(z,w) \mathrm{d} z
        \mathrm{d} w,
\end{equation}
where $\mathcal{C}\subset\Omega$ is a positively oriented
simple closed contour enclosing $[-M,M]$.
\end{theorem}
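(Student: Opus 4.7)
The plan is to use Cauchy's integral formula to represent $f|_{M}$ and $g|_{M}$ as contour integrals of resolvents $r_{z}$ over $\mathcal{C}$, and then to swap the functional $\rho$ with these integrals so as to reduce everything to evaluating $\rho(r_{z}, r_{w})$, which Theorem~\ref{Theorem:ConvergenceSOCauchyTransform} identifies with $G_{2}(z,w)$. Since $\mathcal{C}\subset\Omega$ is a positively oriented simple closed curve enclosing $[-M,M]$, Cauchy's formula gives, for each $x\in[-M,M]$,
\[
f(x) = \frac{1}{2\pi i}\int_{\mathcal{C}} f(z)\, r_{z}(x)\, dz, \qquad f'(x) = \frac{1}{2\pi i}\int_{\mathcal{C}} f(z)\, r_{z}'(x)\, dz,
\]
and analogously for $g$.

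To make the interchange rigorous, I would approximate each contour integral by Riemann sums $f_{n} = (2\pi i)^{-1}\sum_{k=1}^{n} f(z_{k})\, r_{z_{k}}\, \Delta z_{k}$ and $g_{n} = (2\pi i)^{-1}\sum_{\ell=1}^{n} g(w_{\ell})\, r_{w_{\ell}}\, \Delta w_{\ell}$, viewed as elements of $C^{1}([-M,M])$. Because $\mathcal{C}$ is compact and lies at a positive distance from $[-M,M]$, both $(z,x)\mapsto r_{z}(x)=(z-x)^{-1}$ and $(z,x)\mapsto r_{z}'(x)=(z-x)^{-2}$ are jointly uniformly continuous on $\mathcal{C}\times[-M,M]$; consequently $f_{n}\to f$ and $f_{n}'\to f'$ uniformly on $[-M,M]$, and likewise for $g_{n}$. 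The boundedness estimate \eqref{eq:BoundednessRho} from Theorem~\ref{Theorem:BilinearFunctional}\myref{item:rho_extends} then yields $\rho(f_{n}, g_{n}) \to \rho(f,g)$.

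On the other hand, expanding $\rho(f_{n}, g_{n})$ by bilinearity gives
\[
\rho(f_{n}, g_{n}) = \frac{1}{(2\pi i)^{2}}\sum_{k,\ell} f(z_{k})\, g(w_{\ell})\, \rho(r_{z_{k}}, r_{w_{\ell}})\, \Delta z_{k}\, \Delta w_{\ell}.
\]
Theorem~\ref{Theorem:ConvergenceSOCauchyTransform} furnishes $\rho(r_{z}, r_{w}) = G_{2}(z,w)$ for $z,w\in\bC\setminus\bR$; and since $z\mapsto r_{z}$ is continuous from $\bC\setminus[-M,M]$ into $C^{1}([-M,M])$, both sides of this identity are continuous (and separately analytic) on $(\bC\setminus[-M,M])^{2}$, so the identity extends to every pair in $\mathcal{C}\times\mathcal{C}$. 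Continuity of $G_{2}$ on $\mathcal{C}\times\mathcal{C}$ then ensures that the above Riemann sums converge to $(2\pi i)^{-2}\int_{\mathcal{C}}\int_{\mathcal{C}} f(z)g(w)G_{2}(z,w)\, dz\, dw$, and equating the two limits of $\rho(f_{n},g_{n})$ produces the claimed identity. The main technical point throughout is the $C^{1}([-M,M])$-norm convergence $f_{n}\to f$, rather than mere uniform convergence, since \eqref{eq:BoundednessRho} controls $\rho$ by the sup norms of the derivatives; this is precisely where the separation of $\mathcal{C}$ from $[-M,M]$ is essential, as it supplies the joint uniform continuity of $r_{z}'(x)$ on $\mathcal{C}\times[-M,M]$.
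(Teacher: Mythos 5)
Your proof is correct, but it takes a genuinely different route from the paper's. The paper's proof of this theorem works at finite $N$: it applies the Cauchy integral formula at the level of eigenvalues, uses Tonelli--Fubini to swap the covariance with the contour integrals to obtain $\rho_N(f_M,g_M)=\frac{1}{(2\pi i)^2}\int_{\mathcal C}\int_{\mathcal C} f(z)g(w)G_{2,M}^{(N)}(z,w)\,dz\,dw$, then passes to the limit in $N$ via the uniform-on-compacts convergence $G_{2,M}^{(N)}\to G_2$ (Lemma~\ref{Lemma:KeyConvergenceLemma}) together with the truncation Lemma~\ref{Lemma:Truncation} and Theorem~\ref{Theorem:BilinearFunctional}\myref{item:covariance_limit}. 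You, by contrast, stay entirely in the limit and never revisit the random matrix: you use only the boundedness of the extended $\rho$ (part \myref{item:rho_extends}), the identity $\rho(r_z,r_w)=G_2(z,w)$ from Theorem~\ref{Theorem:ConvergenceSOCauchyTransform}, and a Riemann-sum approximation $f_n\to f$, $f_n'\to f'$ uniformly on $[-M,M]$. This is cleaner and more functional-analytic; the delicate point, which you handle correctly, is extending the resolvent identity from $(\bC\setminus\bR)^2$ to $\mathcal{C}\times\mathcal{C}$ (since $\mathcal{C}$ meets $\bR$ outside $[-M,M]$) by observing that both $(z,w)\mapsto\rho(r_z,r_w)$ and $G_2$ are continuous on $(\bC\setminus[-M,M])^2$ and agree on a dense subset. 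Your approach is in fact closer in spirit to the abstract theorem in Section~\ref{Section:AnalyticsSONCPS}, which also works in the limit, though that proof uses the Fr\'echet integral representation (part \myref{item:integral_rep}) while yours avoids it entirely; the trade-off is that the paper's concrete proof exhibits exactly where the convergence is produced at the random matrix level, whereas yours isolates the purely analytic content.
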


We end this section with a central limit theorem for the
linear statistics of random matrix ensembles having a
second-order limit distribution. The statement of following
proposition is similar to Proposition~3.2.9 in
\cite{PasturShcherbina2011}. As the proof is different, we
have provided for the reader's convenience a proof in
Appendix~\ref{Appendix:ProofPropositionCLT} using the
notation and techniques from \S
\ref{Section:DeferredProofs}.

\begin{proposition}
\label{Proposition:CLT}
If $(X_N)_N$ is a self-adjoint random matrix ensemble having
a second-order limit distribution and satisfying A1 and A2,
then, for all $f\in\mathcal{S}$ with
$f(\mathbb{R})\subset\mathbb{R}$,
\begin{equation*}
	\textnormal{Tr}(f(X_N)) - \mathbb{E}(\textnormal{Tr}(f(X_N))) \Rightarrow
	{\mathcal N}_\mathbb{R}(0,\rho(f,f)),
\end{equation*}
where $\Rightarrow$ denotes convergence in distribution.
\end{proposition}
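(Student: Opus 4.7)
The plan is to combine the method of cumulants for polynomial test functions with a $C^{1}$-approximation argument that uses \ao{} and \at{} to transfer limit Gaussianity to arbitrary $f\in\cS$. Setting $Y_{N}=\Tr(f(X_{N}))-\mathbb{E}(\Tr(f(X_{N})))$, by L\'{e}vy's continuity theorem it suffices to prove that $\mathbb{E}(e^{itY_{N}})\to e^{-t^{2}\rho(f,f)/2}$ for every $t\in\bR$; the variance part, $\mathrm{Var}(Y_{N})\to\rho(f,f)$, is already supplied by Theorem~\ref{Theorem:BilinearFunctional}\myref{item:covariance_limit}, so only the higher-order structure needs treatment.

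For a polynomial $p(x)=\sum_{j}c_{j}x^{j}$ the multilinearity of the classical cumulant $k_{r}$ expresses $k_{r}(\Tr(p(X_{N}))-\mathbb{E}(\Tr(p(X_{N}))))$ as a finite linear combination of the cross-cumulants $k_{r}(\Tr(X_{N}^{j_{1}}),\ldots,\Tr(X_{N}^{j_{r}}))$. By the definition of a second-order limit distribution these tend to $0$ for $r\geq 3$ and, for $r=2$, to $\rho(p,p)$. Since the Gaussian is determined by its moments, the method of cumulants yields $\Tr(p(X_{N}))-\mathbb{E}(\Tr(p(X_{N})))\Rightarrow\mathcal{N}_{\bR}(0,\rho(p,p))$; to secure uniform-in-$N$ moment control one first truncates $X_{N}$ to the event $\{\|X_{N}\|\leq M\}$, which \ao{} makes a probabilistically negligible modification.

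To pass from polynomials to $f\in\cS$, given $\epsilon>0$ I approximate $f|_{[-M-1,M+1]}$ by a polynomial $p$ satisfying $\|(f-p)'\|_{[-M-1,M+1]}<\epsilon$ (by Weierstrass applied to $f'$ and then integrated), and splice through a $C^{1}$ cutoff $\chi$ that equals $1$ on $[-M,M]$ and vanishes outside $[-M-1,M+1]$: the function $h=p\chi+f(1-\chi)$ agrees with $p$ on $[-M,M]$ and with $f$ outside $[-M-1,M+1]$, and, after fixing the additive constant in $p$ so that $p(0)=f(0)$, satisfies $\|(f-h)'\|_{\infty}=O(\epsilon)$. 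Assumption \ao{} ensures $\Tr(h(X_{N}))=\Tr(p(X_{N}))$ with probability $1-o(N^{-8})$, so that $Y_{N}^{h}:=\Tr(h(X_{N}))-\mathbb{E}(\Tr(h(X_{N})))\Rightarrow\mathcal{N}_{\bR}(0,\rho(h,h))$, while \at{} gives $\mathrm{Var}(Y_{N}-Y_{N}^{h})\leq K\|(f-h)'\|_{\infty}^{2}=O(\epsilon^{2})$ uniformly in $N$.

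Combining these yields
\begin{equation*}
|\mathbb{E}(e^{itY_{N}})-\mathbb{E}(e^{itY_{N}^{h}})|\leq|t|\sqrt{\mathrm{Var}(Y_{N}-Y_{N}^{h})}=O(|t|\epsilon),
\end{equation*}
and sending $N\to\infty$ first and then $\epsilon\to 0$, with the continuity of $\rho$ from Theorem~\ref{Theorem:BilinearFunctional}\myref{item:rho_extends} guaranteeing $\rho(h,h)\to\rho(f,f)$, closes the argument. The principal technical obstacle is engineering the splice: $h$ must simultaneously agree with a polynomial on $[-M,M]$ (so the polynomial CLT transfers cleanly via \ao{}) and remain globally $C^{1}$-close to $f$ (so \at{} provides useful error control). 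The cutoff's derivative can in principle amplify the error by $\|p-f\|_{\infty,[-M-1,M+1]}$, but this is kept $O(\epsilon)$ by fixing the additive constant of $p$ as above and invoking the fundamental theorem of calculus on $[-M-1,M+1]$.
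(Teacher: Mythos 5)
Your overall architecture (cumulant argument for polynomials, then a $C^1$-approximation controlled by \ao{} and \at{}) matches the paper's, but your splice trick gives it a somewhat different structure, and the step from $Y_N^p:=\Tr(p(X_N))-\mathbb{E}(\Tr(p(X_N)))$ to $Y_N^h$ contains a genuine gap. You assert that because $\Tr(h(X_N))=\Tr(p(X_N))$ on the event $\{\|X_N\|\leq M\}$, which has probability $1-o(N^{-8})$, the CLT for $Y_N^p$ transfers to $Y_N^h$. This does not follow from the probability bound alone: on the good event $Y_N^h-Y_N^p$ is not zero but equals the deterministic constant $\mathbb{E}(\Tr((p-h)(X_N)))$, and on the bad event $\Tr((h-p)(X_N))$ is a polynomially growing quantity whose contribution to the variance must be estimated, not merely declared negligible. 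What one actually needs is $\mathrm{Var}(Y_N^h-Y_N^p)=\mathrm{Var}(\Tr((h-p)(X_N)))\to 0$, and \at{} is useless here because $\|(h-p)'\|_\infty$ is not small (it is only $\|(h-p)'\|_M=0$ that is small). This is precisely what the paper's Lemma~\ref{Lemma:Truncation}/Lemma~\ref{Lemma:FiniteDimensionContinuity} are built to provide: since $h-p$ is polynomially bounded and vanishes on $[-M,M]$, Lemma~\ref{Lemma:FiniteDimensionContinuity}(b) gives $\mathrm{Var}(\Tr((h-p)(X_N)))\leq K_{h-p,h-p}\,N^2\,\mathbb{P}(\|X_N\|>M)^{1/4}\to0$ by \ao{}. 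Without invoking that machinery, or redoing the H\"older/Cauchy--Schwarz estimates that establish it (which use \az{} to bound the polynomial moments on the bad event), your assertion is unsupported.

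Once that gap is filled, your proof does work and is structurally attractive: the splice $h=p\chi+f(1-\chi)$ arranges things so that $f-h=(f-p)\chi\in C^1(\bR)$ with $\|(f-h)'\|_\infty=O(\epsilon)$, so \at{} applies \emph{globally} to control $\mathrm{Var}(Y_N-Y_N^h)$, cleanly separating the Poincar\'e estimate from the truncation. The paper instead performs a single approximation $f\to p_k$ and folds the truncation error and the Poincar\'e estimate into one bound (Lemma~\ref{Lemma:FiniteDimensionContinuity}) on $\mathrm{Var}(Z_N-Z_N^{(k)})=\rho_N(p_k-f,p_k-f)$; it then closes via bounded Lipschitz test functions rather than characteristic functions, which is an equivalent but cosmetically different closing move. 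Your use of Weierstrass on $f'$ followed by integration is also a perfectly adequate (indeed simpler) substitute for the paper's appeal to Lemma~\ref{Lemma:ApproximationRunge}, since $f\in\cS$ is only $C^1$ and no analyticity is available.
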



\section{Examples of Random Matrix Ensembles}
\label{Section:Examples}

In this section we gather some examples of random matrix
ensembles satisfying A0, A1 and A2. In particular, Example~2
shows that block Gaussian matrices fall under the framework
of our main results.

\subsection{Example 1: Gaussian Unitary Ensemble}

Let $(X_N)_N$ be the (normalized) Gaussian Unitary Ensemble
(GUE), i.e., for each $N\in\mathbb{N}$, $X_N$ is an $N\times
N$ self-adjoint random matrix such that $\{X_N(i,j) : 1\leq
i \leq j\leq N\}$ are independent random variables with
$X_N(i,i) \sim {\mathcal N}_\mathbb{R}(0,N^{-1})$ and
$X_N(i,j) \sim {\mathcal N}_\mathbb{C}(0,N^{-1})$ ($i\neq
j$). In this case:
\begin{itemize}
	\item[0)] $(X_N)_N$ has a second-order limit
          distribution. See Theorem~3.1 in
          \cite{MingoSpeicher2006}.
	
	\item[1)] For all $\epsilon>0$, there exists $C>0$ such that
	\begin{equation}
	\label{eq:LargeDeviationGUE}
		\mathbb{P}(\|X_N\| > 2(1+\epsilon)) \leq 2C
                \exp\left(- \frac{2\epsilon^2}{C} N\right).
	\end{equation}
	In particular, A1 is satisfied for every $M>2$. See
        (1.4) in \cite{LedouxRider2010}.
	
	\item[2)] If $f:\mathbb{R}\to\mathbb{C}$ is differentiable, then
	\begin{equation}
	\label{eq:A2GUE}
		\textnormal{Var}(\textnormal{Tr}(f(X_N))) \leq \|f'\|_\infty^2.
	\end{equation}
See Proposition~2.1.8 in \cite{PasturShcherbina2011}. This
shows that $(X_N)_N$ satisfies A2 with $K=1$.
\end{itemize}

As an additional comment, the second-order Cauchy transform
of this ensemble is given by
\begin{equation}
\label{eq:SOCauchyTransformSOF3}
	G_2(z,w) = \frac{G'(z)G'(w)}{[G(z)-G(w)]^2} - \frac{1}{(z-w)^2},
\end{equation}
where $\displaystyle G(z) = \frac{z-\sqrt{z^2-4}}{2}$. See
(7) in \cite{CollinsMingoSniadySpeicher2007}.  In
Theorem~3.1.1 in \cite{PasturShcherbina2011}, it was
established that
\begin{align}
\label{eq:SOCauchyTransformPasturShcherbina}
	\lim_{N\to\infty} &
        \textnormal{Cov}(\textnormal{Tr}((z-X_N)^{-1}),\textnormal{Tr}((w-X_N)^{-1}))
        \\ & = \frac{1}{2(z-w)^2}
        \left(\frac{zw-4}{\sqrt{z^2-4}\sqrt{w^2-4}} -
        1\right).\notag
\end{align}
Theorem~\ref{Theorem:ConvergenceSOCauchyTransform} leads to
the equality of the right hand sides of
\eqref{eq:SOCauchyTransformSOF3} and
\eqref{eq:SOCauchyTransformPasturShcherbina}. Given the
relative simplicity of these expressions, it can be shown
directly that they are actually equal. (Exercise for the
reader.)

We would like to point out that the Wishart/Laguerre
ensemble has a second-order limit distribution and satisfies
A1 and A2. See \cite{MingoNica2004},
\cite{dumitriu_edelman_2006}, Theorem 3.5 in
\cite{MingoSpeicher2006}, Theorem~2 in
\cite{LedouxRider2010}, and Proposition~7.2.1 in
\cite{PasturShcherbina2011}. As with the right hand sides of
\eqref{eq:SOCauchyTransformSOF3} and
\eqref{eq:SOCauchyTransformPasturShcherbina},
Theorem~\ref{Theorem:ConvergenceSOCauchyTransform}
establishes the non-trivial fact that the free probability
theory and the random matrix theory expressions for the
second-order Cauchy transform of this ensemble are equal.

\subsection{Example 2: Block Gaussian Matrices}
\label{Section:ExampleBlockGaussianMatrices}

Let $A_1,\ldots,A_r$ be $d\times d$ self-adjoint
matrices. Assume that $X_N^{(1)},\ldots,X_N^{(r)}$ are
independent GUE matrices as in Example~1. The $dN\times dN$
random matrix
\begin{equation*}
	X_N = \sum_{k=1}^r A_k\otimes X_N^{(k)}
\end{equation*}
is called a block Gaussian matrix. In this case:
\begin{itemize}
	\item[0)] Recall that $\textnormal{Tr}(A\otimes B) =
          \textnormal{Tr}(A)\textnormal{Tr}(B)$. Hence
	\begin{equation*}
		\textnormal{Tr}(X_N^n) =
                \sum_{k_1,\ldots,k_n=1}^r
                \textnormal{Tr}(A_{k_1} \cdots A_{k_n})
                \textnormal{Tr}(X_N^{(k_1)} \cdots
                X_N^{(k_n)}).
	\end{equation*}
	Since the $r^{th}$ cumulant, $k_r$, is $r$-linear,
        Theorem~3.1 in \cite{MingoSpeicher2006} readily
        shows that $(X_N)_N$ has a second-order limit
        distribution.
	
	\item[1)] A routine computation shows that, for every $M>0$,
	\begin{equation*}
		\mathbb{P}(\|X_N\|>M) \leq r
                \mathbb{P}(\left\|X_N^{(1)}\right\|
                >\frac{M}{r\max_k \|A_k\|}).
	\end{equation*}
	In particular, if we take $M_0=4r\max_k \|A_k\|$,
        then \eqref{eq:LargeDeviationGUE} implies that
	\begin{equation*}
		\mathbb{P}(\|X_N\|>M_0) \leq 2rC \exp\left(-\frac{2}{C} N\right).
	\end{equation*}
	Thus A1 is satisfied with $M=M_0$.
	
	\item[2)] If $f:\mathbb{R}\to\mathbb{C}$ is differentiable, then
	\begin{equation*}
		\textnormal{Var}(\textnormal{Tr}(f(X_N)))
                \leq r^2 \left\|\sum_{k=1}^r A_k^2\right\|^2
                \|f'\|_\infty^2.
	\end{equation*}
	See Proposition~4.7 in
        \cite{HaagerupThorbjornsen2005}. This shows that
        block Gaussian matrices satisfy A2.
\end{itemize}

As mentioned in the introduction, the second-order Cauchy
transform of block Gaussian matrices was recently found in
\cite{diaz2020global}, see equation (5).

\subsection{Example 3: $A+UBU^*$}

For each $N\in\mathbb{N}$, let $U_N$ be an $N\times N$ Haar
unitary. Assume that $(A_N)_N$ and $(B_N)_N$ are
self-adjoint non-random matrix ensembles such that their
eigenvalue distributions converge in distribution and
$\displaystyle T:=\sup_{N\in\mathbb{N}}
\max\{\|A_N\|,\|B_N\|\}$ is finite. Let $X_N = A_N + U_N B_N
U_N^*$. In this case:
\begin{itemize}
	\item[0)] $(X_N)_N$ has a second-order limit
          distribution. See Theorem~1 in
          \cite{MingoSniadySpeicher2007}.
	\item[1)] For all $N\in\mathbb{N}$, $\|X_N\| \leq
          2T$. In particular $X_N$ satisfies A1 with $M=2T$.
	\item[2)] If $f:\mathbb{R}\to\mathbb{C}$ is differentiable, then
	\begin{equation*}
		\textnormal{Var}(\textnormal{Tr}(f(X_N))) \leq 4T^2\|f'\|_\infty^2.
	\end{equation*}
Thus $X_N$ satisfies A2 with $K=4T^2$. See page 303 in
\cite{PasturShcherbina2011}.
\end{itemize}

For notational convenience, let $\mu_A$ and $\mu_B$ be the
limiting eigenvalue distributions of $(A_N)_N$ and
$(B_N)_N$, respectively. By
Theorem~\ref{Theorem:ConvergenceSOCauchyTransform},
\begin{equation*}
G_2(z,w) = \lim_{N\to\infty}
\textnormal{Cov}(\textnormal{Tr}((z-X_N)^{-1}),\textnormal{Tr}((w-X_N)^{-1})).
\end{equation*}
In the notation of Chapter~3 in \cite{MingoSpeicher2017},
Theorem~10.2.1 and (10.2.30) in \cite{PasturShcherbina2011}
imply that
\begin{equation*}
G_2(z,w) = \frac{\partial^2}{\partial z \partial w} \log
\frac{\omega_A(z)-\omega_A(w)}{z-w}
\frac{\omega_B(z)-\omega_B(w)}{F(z)-F(w)},
\end{equation*}
where $\omega_A$ and $\omega_B$ are the so-called
subordination functions and
$F(z)=1/G_{\mu_A\boxplus\mu_B}(z)$. Revisiting this
expression for the second-order Cauchy transform is of
interest in view of the recent developments in random matrix
theory based on the subordination functions, e.g.,
\cite{BelinschiBercoviciCapitaineFevrier2014,
  BelinschiMaiSpeicher2015, Shlyakhtenko2015,
  CapitaineDonatiMartin2016, BaoErdosSchnelli2017}.

\section{Analytic Second-Order Non-Commutative Probability Spaces}
\label{Section:AnalyticsSONCPS}

In order to motivate the definition of analytic second-order
NCPS below, recall the integral representation for the
asymptotic covariance $\rho$:
\begin{equation}
\label{eq:IntegralRepresentationRho}
	\rho(f,g) \coloneqq \int_{-M}^{M} \int_{-M}^{M} f'(x) g'(y) \mathrm{d}u(x,y).
\end{equation}	 

Observe that the integral representation in
\eqref{eq:IntegralRepresentationRho} only depends on
$f\vert_{M}$ and $g\vert_{M}$. Since $f\vert_{M}\in
C^{1}([-M,M])$ for every polynomially bounded $f\in
C^1(\bR)$, in the sequel we can restrict our attention to
the function space $C^{1}([-M,M])$.

We define the concept of analytic second-order NCPS
motivated by the integral representation of the asymptotic
covariance mapping associated to a random matrix ensemble in
\eqref{eq:IntegralRepresentationRho}.

\begin{definition}
A second-order NCPS $(\mathcal{A},\varphi,\rho)$, with
$\mathcal{A} = C^{1}([-M,\ab M])$, for some $M>0$, is called
\emph{analytic} if
\begin{itemize}
	\item[$(a)$] there exists a probability measure
          $\mu$ on $[-M,M]$ such that, for all
          $f\in\mathcal{A}$,
	\begin{equation*}
		\varphi(f) = \int f(x) \mathrm{d}\mu(x);
	\end{equation*}
	\item[$(b)$] there exists a bounded Fr\'{e}chet
          variation function $u:[-M,M]^2 \ab\to\mathbb{R}$
          such that, for all $f,g\in\mathcal{A}$,
	\begin{equation}
	\label{eq:IntregralRepresentationRho}
		\rho(f,g) = \int_{-M}^{M} \int_{-M}^{M} f'(x) g'(y) \mathrm{d} u(x,y).
	\end{equation}
\end{itemize}
\end{definition}

Observe that Theorem~\ref{Theorem:BilinearFunctional} $(c)$
implies that the second-order NCPS associated to a random
matrix ensemble satisfying A0, A1, and A2 is indeed
analytic. Nonetheless, it is unknown to the authors if every
analytic second-order NCPS arise in this manner, i.e., if
for every $u:[-M,M]^2\to\mathbb{R}$ of bounded Fr\'{e}chet
variation there exists a random matrix ensemble whose
asymptotic covariance mapping satisfies
\eqref{eq:IntegralRepresentationRho}.

The Fr\'{e}chet representation theorem establishes that if
$u:[-M,M]^2 \ab\to\mathbb{R}$ has bounded Fr\'{e}chet
variation, then the bilinear functional
$\Phi:C([-M,M])^{2}\to\mathbb{C}$ defined by
\begin{equation*}
	\Phi(f,g) = \int_{-M}^{M} \int_{-M}^{M} f(x) g(y) \mathrm{d} u(x,y).
\end{equation*}
is continuous in each argument with respect to the supremum norm. Since
\begin{equation*}
\rho = \Phi \circ \left(\frac{\mathrm{d}}{\mathrm{d}
  x}\times\frac{\mathrm{d}}{\mathrm{d} y}\right),
\end{equation*}
we conclude that the unbounded nature of $\rho$ comes from
the differential operator $\frac{\mathrm{d}}{\mathrm{d}
  x}\times\frac{\mathrm{d}}{\mathrm{d} y}$. This observation
immediately leads to the following criterion for the
exchange of limit and $\rho$.

\begin{lemma}\label{lemma:continuity_of_rho}
Let $(\mathcal{A},\varphi,\rho)$ be an analytic second-order
NCPS with $\cA = C^1([ -M, M])$. Suppose $\{ f_n \}_{n =
  1}^\infty$ and $\{ g_n \}_{n=1}^\infty$ are in $C^1([ -M,
  M])$.  If $\{f_{n}'\}_n$ converges uniformly to $f'$ and
$\{g_{n}'\}_n$ converges uniformly to $g'$, then
\begin{equation*}
	\rho(f,g) = \lim_{n\to\infty} \rho(f_{n},g_{n}).
\end{equation*}
\end{lemma}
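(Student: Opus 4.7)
The plan is to reduce everything to the Fr\'echet representation and then run a standard bilinear continuity argument. Since the space is assumed analytic, there is a bounded Fr\'echet variation $u$ on $[-M,M]^2$ with
\[
\rho(h_1, h_2) \;=\; \int_{-M}^{M}\!\int_{-M}^{M} h_1'(x)\, h_2'(y)\, \mathrm{d}u(x,y).
\]
By the first Fr\'echet theorem from Section~\ref{Section:FrechetIntegral}, the bilinear functional
\[
\Phi(\varphi,\psi) \;=\; \int_{-M}^{M}\!\int_{-M}^{M} \varphi(x)\psi(y)\,\mathrm{d}u(x,y)
\]
is bounded on $C([-M,M])^2$ in the supremum norm: there is $K\ge 0$ such that $|\Phi(\varphi,\psi)|\le K\|\varphi\|_M\|\psi\|_M$. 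So the first step is simply to record that $\rho(f_n,g_n)=\Phi(f_n',g_n')$ and $\rho(f,g)=\Phi(f',g')$.

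Next I would use the standard bilinear telescoping
\[
\Phi(f_n', g_n') - \Phi(f', g') \;=\; \Phi(f_n'-f',\, g_n') \,+\, \Phi(f',\, g_n'-g'),
\]
and estimate each piece by the bound on $\Phi$:
\[
|\Phi(f_n'-f', g_n')| \le K\,\|f_n'-f'\|_M\,\|g_n'\|_M, \quad
|\Phi(f', g_n'-g')| \le K\,\|f'\|_M\,\|g_n'-g'\|_M.
\]
The hypothesis gives $\|f_n'-f'\|_M\to 0$ and $\|g_n'-g'\|_M\to 0$. Since $\{g_n'\}$ converges uniformly to $g'$ on $[-M,M]$, the sequence $\|g_n'\|_M$ is bounded, and $\|f'\|_M<\infty$ since $f\in C^1([-M,M])$. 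Both terms therefore tend to zero, giving $\rho(f_n,g_n)\to\rho(f,g)$.

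There is essentially no obstacle here; the lemma is a clean corollary of the boundedness statement built into Fr\'echet's theorem and the integral representation postulated in the definition of an analytic second-order NCPS. The only thing that needs a moment's care is that one must split the difference bilinearly (rather than trying to bound $\Phi(f_n'-f',g_n'-g')$ alone), so that one factor in each term is controlled by uniform convergence of the derivatives and the other by a fixed supremum bound.
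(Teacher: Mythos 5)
Your proof is correct and is exactly the argument the paper has in mind (the paper does not write out a formal proof, but the surrounding discussion — that $\rho = \Phi \circ (\frac{\mathrm d}{\mathrm dx} \times \frac{\mathrm d}{\mathrm dy})$ with $\Phi$ the bounded bilinear functional given by Fr\'echet's theorem — is precisely your reduction). The bilinear telescoping $\Phi(f_n',g_n') - \Phi(f',g') = \Phi(f_n'-f',g_n') + \Phi(f',g_n'-g')$, combined with the uniform bound $|\Phi(\varphi,\psi)| \leq K\|\varphi\|_M\|\psi\|_M$ and the boundedness of $\|g_n'\|_M$, is the standard and intended route.
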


Now we turn our attention to the analytic version of the
second-order Cauchy transform. For each
$z\in\mathbb{C}\setminus[-M,M]$, let
$r_{z}:[-M,M]\to\mathbb{C}$ be defined by
\begin{equation*}
	r_{z}(x) = \frac{1}{z-x}.
\end{equation*}
Observe that $r_{z}$ is a differentiable function, i.e., $r_{z}\in C^{1}([-M,M])$.

\begin{definition}
\label{def:soncps}
Let $(\mathcal{A},\varphi,\rho)$ be an analytic second-order
NCPS. We define the (analytic) second-order Cauchy
transform \[G_{2}:(\mathbb{C}\setminus[-M,M])^{2}\ab\to\mathbb{C}\]
as
\begin{equation}
\label{eq:DefinitionAnalyticSOCauchyTransform}
	G_{2}(z,w) \coloneqq \rho(r_{z},r_{w}).
\end{equation}
\end{definition}

Observe that \eqref{eq:DefinitionAnalyticSOCauchyTransform}
and the integral representation of $\rho$ in
\eqref{eq:IntregralRepresentationRho} imply that, for all
$z,w\in\mathbb{C}\setminus[-M,M]$,
\begin{equation}
\label{eq:IntegralRepresentationSOCauchyTransform}
G_{2}(z,w) = \int_{-M}^{M} \int_{-M}^{M} \frac{1}{(z-x)^2}
\frac{1}{(w-y)^2} \mathrm{d} u(x,y).
\end{equation}
Indeed, the second-order Cauchy transform of many random
matrix ensembles in the literature have an integral
representation of the form
\eqref{eq:IntegralRepresentationSOCauchyTransform}, see,
e.g., \cite[Eq. (1.7)]{BaiSilverstein2004} and
\cite[Eq. (2.9)]{diaz_jaramillo_pardo_2020}.

In the next proposition we show that the second-order Cauchy
transform is an analytic function whose power series
expansion at infinity coincides with the generating function
in \eqref{eq:IntroSOCauchyTransform}.

\begin{proposition}
If $(\mathcal{A},\varphi,\rho)$ is an analytic second-order
NCPS with $\cA = C^1([-M, M])$, then $G_{2}$ is an analytic
function on $(\mathbb{C}\setminus[-M,M])^2$ such that, for
all $|z|,|w|>M$,
\begin{equation*}
G_{2}(z,w) = \sum_{m,n\geq0} \frac{\rho(x^m,x^n)}{z^{m+1}w^{n+1}}.
\end{equation*}
\end{proposition}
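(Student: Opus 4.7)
The plan is to deduce both assertions from the integral representation of $\rho$ guaranteed by part $(b)$ of the definition of analytic second-order NCPS, combined with the boundedness of the Fréchet integral and the continuity property provided by Lemma~\ref{lemma:continuity_of_rho}. By Definition~\ref{def:soncps} and \eqref{eq:IntregralRepresentationRho},
$$G_2(z,w)=\rho(r_z,r_w)=\int_{-M}^M\!\int_{-M}^M \frac{1}{(z-x)^2}\,\frac{1}{(w-y)^2}\,\mathrm{d}u(x,y).$$
Writing $\Phi(\phi,\psi)=\int\!\int \phi(x)\psi(y)\,\mathrm{d}u(x,y)$, Theorem~\ref{thm:frechet_II} (together with the first Fréchet theorem) gives a constant $K$ such that $|\Phi(\phi,\psi)|\leq K\|\phi\|_M\|\psi\|_M$, and $G_2(z,w)=\Phi(r_z',r_w')$.

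For analyticity, I would fix $w\in\mathbb{C}\setminus[-M,M]$ and $z_0\in\mathbb{C}\setminus[-M,M]$, and set $d=\mathrm{dist}(z_0,[-M,M])>0$. A direct estimate shows that
$$\frac{r_{z_0+h}'(x)-r_{z_0}'(x)}{h}=\frac{1}{h}\left(\frac{1}{(z_0+h-x)^2}-\frac{1}{(z_0-x)^2}\right)\longrightarrow -\frac{2}{(z_0-x)^3}$$
uniformly in $x\in[-M,M]$ as $h\to 0$, the uniformity coming from $|z_0-x|\geq d$. Continuity of $\Phi$ in its first argument then yields $\partial_z G_2(z_0,w)=\Phi(-2/(z_0-\cdot)^3,r_w')$, so $z\mapsto G_2(z,w)$ is holomorphic on $\mathbb{C}\setminus[-M,M]$; by symmetry the same holds in $w$. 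Hartogs' theorem on separately holomorphic functions then gives joint analyticity on $(\mathbb{C}\setminus[-M,M])^2$.

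For the power series, fix $|z|,|w|>M$. The geometric expansion $r_z(x)=\sum_{m\geq 0}x^m/z^{m+1}$ converges uniformly on $[-M,M]$ together with its derivative, i.e.\ the partial sums $p_N^{(z)}(x)=\sum_{m=0}^N x^m/z^{m+1}$ satisfy $(p_N^{(z)})'\to r_z'$ uniformly on $[-M,M]$; similarly for $w$. By bilinearity of $\rho$ and Lemma~\ref{lemma:continuity_of_rho},
$$G_2(z,w)=\lim_{N\to\infty}\rho\!\left(p_N^{(z)},p_N^{(w)}\right)=\lim_{N\to\infty}\sum_{m=0}^{N}\sum_{n=0}^{N}\frac{\rho(x^m,x^n)}{z^{m+1}w^{n+1}}.$$
Applying the bound $|\rho(f,g)|\leq K\|f'\|_M\|g'\|_M$ with $f=x^m$, $g=x^n$ for $m,n\geq 1$ gives $|\rho(x^m,x^n)|\leq KmnM^{m+n-2}$, while $\rho(1,x^n)=\rho(x^m,1)=0$ by the axioms of a second-order NCPS. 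The product of the geometric-type series $\sum_{m\geq 1}m(M/|z|)^m$ and $\sum_{n\geq 1}n(M/|w|)^n$ converges when $|z|,|w|>M$, so the double series is absolutely convergent; hence the square partial sums converge to the double sum and the stated formula follows.

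The only real technical step is justifying joint analyticity; one could sidestep Hartogs by noting that the estimate $|G_2(z,w)|\leq K\|r_z'\|_M\|r_w'\|_M\leq K/(d_z d_w)^2$ makes $G_2$ locally bounded, so separate analyticity alone already implies joint analyticity. Everything else reduces to the routine interplay between the uniform convergence of geometric series and the continuity of the bounded bilinear functional $\Phi$.
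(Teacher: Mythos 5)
Your proof is correct and, for the power–series claim, it is essentially the paper's argument: both expand $(z-x)^{-2}$ and $(w-y)^{-2}$ in a geometric series whose partial sums have uniformly convergent derivatives on $[-M,M]$, and both invoke Lemma~\ref{lemma:continuity_of_rho} to pass the limit inside $\rho$. The only substantive difference is that you explicitly establish the analyticity of $G_2$ on all of $(\mathbb{C}\setminus[-M,M])^2$ — by showing the difference quotient of $r_z'$ converges uniformly on $[-M,M]$, obtaining separate holomorphy from the continuity of the Fréchet functional, and then passing to joint analyticity via local boundedness (or Hartogs) — whereas the paper's proof leaves this part implicit; you also spell out the absolute-convergence bound $|\rho(x^m,x^n)|\leq KmnM^{m+n-2}$ needed to pass from square partial sums to the unordered double series, and you note that $\rho(1,\cdot)=\rho(\cdot,1)=0$ accounts for the $m,n\geq 0$ indexing in the statement. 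These are genuine refinements that fill small gaps in the paper's exposition without changing the underlying method.
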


\begin{proof}
For $|z| > M$ the series, considered as a function of $x$,
$\sum_{n=1}^\infty \frac{n x^{n-1}}{z^{n+1}}$ converges
uniformly to $(z - x)^{-2}$ on $[-M, M]$. Likewise, for $|w|
> M$, the series $\sum_{n=1}^\infty \frac{n y^{n-1}}{w^n}$
converges uniformly to $(w - y)^{-2}$ on $[-M, M]$. By Lemma
\ref{lemma:continuity_of_rho} we have
\begin{align*}
G_2(z, w) & = \int_{-M}^M \int_{-M}^M (z -x)^{-2} (w -
y)^{-2} \, du(x,y)\\ & = \sum_{m, n \geq 1} \frac{m
  n}{z^{m+1}w^{n+1}} \int_{-M}^M \int_{-M}^M x^{m-1} y^{n-1}
\, du(x,y) \\ & = \sum_{m, n \geq 1} \frac{\rho(x^m,
  x^n)}{z^{m+1}w^{n+1}},
\end{align*}
as required.
\end{proof}

Note that the previous proposition generalizes
Theorem~\ref{Theorem:ConvergenceSOCauchyTransform}. Indeed,
if the second-order NCPS $(\mathcal{A},\varphi,\rho)$ is
associated to a random matrix ensemble satisfying A0, A1,
and A2, then the previous proposition is a simple
consequence of
Theorem~\ref{Theorem:ConvergenceSOCauchyTransform}. In a
similar spirit, the following theorem generalizes
Theorem~\ref{Theorem:FluctuationsLinearStatistics}.

\begin{theorem}
Assume that $(\mathcal{A},\varphi,\rho)$ is an analytic
second-order NCPS. If $f,g\in C^{1}([-M,M])$ extend
analytically to a complex domain $\Omega \supset [-M,M]$,
then
\begin{equation}\label{eq:int_rep_rho}
\rho(f,g) = \frac{1}{(2\pi i)^2} \int_{\mathcal{C}}
\int_{\mathcal{C}} f(z) g(w) G_{2}(z,w) \mathrm{d} z
\mathrm{d} w,
\end{equation}
where $\mathcal{C}\subset\Omega$ is a positively oriented
simple closed contour enclosing $[-M,M]$.
\end{theorem}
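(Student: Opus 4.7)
The plan is to reduce the identity to the integral representation of $\rho$ given by \eqref{eq:IntregralRepresentationRho} and of $G_2$ given by \eqref{eq:IntegralRepresentationSOCauchyTransform}, and to use Cauchy's integral formula to bridge them. Since $f$ extends analytically to $\Omega$ and the contour $\mathcal{C}$ encloses $[-M,M]$, for every $x\in[-M,M]$ we have
\[
f'(x) = \frac{1}{2\pi i}\int_{\mathcal{C}} \frac{f(z)}{(z-x)^2}\,\mathrm{d}z,
\]
and similarly for $g'(y)$. Substituting these expressions into \eqref{eq:IntregralRepresentationRho} and formally interchanging the contour integrals with the Fr\'echet integral in $\mathrm{d}u(x,y)$ produces exactly the right-hand side of \eqref{eq:int_rep_rho}, after recognizing the inner double Fr\'echet integral as $G_2(z,w)$ via \eqref{eq:IntegralRepresentationSOCauchyTransform}. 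The substance of the proof is the justification of this interchange.

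Because the Fr\'echet integral is not a measure-theoretic integral, a direct Fubini argument is unavailable, so I would instead argue by a Riemann-sum approximation that exploits Lemma~\ref{lemma:continuity_of_rho}. Parametrize $\mathcal{C}$ by $\gamma:[0,1]\to\Omega$, pick partitions $0=t_0^{(n)}<t_1^{(n)}<\cdots<t_{N_n}^{(n)}=1$ with mesh going to zero, and set $z_k=\gamma(t_k^{(n)})$, $\Delta z_k=z_k-z_{k-1}$. Define
\[
f_n(x) = \frac{1}{2\pi i}\sum_{k=1}^{N_n} \frac{f(z_k)}{z_k-x}\,\Delta z_k,
\qquad g_n(y) = \frac{1}{2\pi i}\sum_{k=1}^{N_n} \frac{g(w_k)}{w_k-y}\,\Delta w_k,
\]
which are finite $\bC$-linear combinations of resolvent functions $r_{z_k}$ and $r_{w_k}$. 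Since $\mathcal{C}$ is a compact subset of $\Omega\setminus[-M,M]$, the kernels $f(z)/(z-x)$ and $f(z)/(z-x)^2$ are jointly uniformly continuous on $\mathcal{C}\times[-M,M]$, so by uniform continuity of the Riemann sums one obtains $f_n\to f$ and $f_n'\to f'$ uniformly on $[-M,M]$, and analogously for $g_n$.

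With this in place, Lemma~\ref{lemma:continuity_of_rho} yields $\rho(f_n,g_n)\to\rho(f,g)$. On the other hand, bilinearity of $\rho$ together with the definition $G_2(z,w)=\rho(r_z,r_w)$ gives
\[
\rho(f_n,g_n) = \frac{1}{(2\pi i)^2}\sum_{j,k} f(z_j)\,g(w_k)\,G_2(z_j,w_k)\,\Delta z_j\,\Delta w_k,
\]
which is a Riemann sum for the double contour integral on the right of \eqref{eq:int_rep_rho}. Since $G_2$ is analytic, and in particular continuous, on $(\bC\setminus[-M,M])^2\supset\mathcal{C}\times\mathcal{C}$, this Riemann sum converges as $n\to\infty$ to $(2\pi i)^{-2}\int_{\mathcal{C}}\int_{\mathcal{C}} f(z)g(w)G_2(z,w)\,\mathrm{d}z\,\mathrm{d}w$. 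Comparing the two limits completes the proof.

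The main obstacle is the uniform convergence of the derivatives $f_n'\to f'$ on $[-M,M]$, since Lemma~\ref{lemma:continuity_of_rho} requires control of the first derivatives rather than merely of the functions themselves. This is handled by the positive distance between $\mathcal{C}$ and $[-M,M]$, which makes $(z-x)^{-2}$ bounded and uniformly continuous in its variables along the contour; nothing more delicate is needed beyond standard estimates for Riemann sums of continuous functions on compact sets.
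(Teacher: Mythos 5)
Your proof is correct, and it takes a genuinely different route from the paper's. The paper works from the right-hand side of \eqref{eq:int_rep_rho}: it substitutes the integral representation \eqref{eq:IntegralRepresentationSOCauchyTransform} of $G_2$, interchanges the two contour integrals with the Fr\'echet integral (justified with the brief remark that all integrals are over compact sets), and then recognizes the inner contour integrals as $f'(x)$ and $g'(y)$ via the Cauchy integral formula, recovering $\rho(f,g)$ by \eqref{eq:IntregralRepresentationRho}. You instead work from the left-hand side, approximating $f$ and $g$ by Riemann sums of resolvents so that $f_n, g_n$ are finite linear combinations of the $r_{z_k}$, passing through $\rho$ by bilinearity to get a Riemann sum involving $G_2(z_j,w_k)=\rho(r_{z_j},r_{w_k})$, and then taking limits on both sides using Lemma~\ref{lemma:continuity_of_rho} and continuity of $G_2$. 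The upshot is that you never have to justify a Fubini-type interchange between a contour integral and the Fr\'echet integral (which is the step the paper treats most tersely, and which is not a routine measure-theoretic Fubini); instead everything reduces to bilinearity, the $C^1$-continuity of $\rho$, and uniform convergence of Riemann sums of continuous functions on compact sets. The paper's proof is shorter and transparently exhibits the structure connecting \eqref{eq:IntregralRepresentationRho} to \eqref{eq:IntegralRepresentationSOCauchyTransform}; yours is more self-contained and avoids the interchange step. Both are sound, and yours could reasonably be read as the detailed justification of the interchange the paper asserts.
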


\begin{proof}
On the right hand side of (\ref{eq:int_rep_rho}) we may
interchange the integral for $G_2(z, w)$ given by
(\ref{eq:IntegralRepresentationSOCauchyTransform}) with the
two contour integrals, as all the integrals are over compact
sets. Then for $x \in [-M, M]$ we have $f'(x) = \frac{1}{2
  \pi i} \int_C f(z)(z - x)^{-2} \, \mathrm dz$; likewise
$g'(y) = \frac{1}{2 \pi i} \int_C g(w)(w - y)^{-2} \,
\mathrm dw$ for $y \in [-M, M]$. With these substitutions we
have the left hand side of (\ref{eq:int_rep_rho}).
\end{proof}

\section{Proofs of Theorems \ref{Theorem:BilinearFunctional}, \ref{Theorem:ConvergenceSOCauchyTransform}, and \ref{Theorem:FluctuationsLinearStatistics}}
\label{Section:DeferredProofs}

The following notation will be used through the rest of this
section. Assume that a self-adjoint random matrix ensemble
$(X_N)_N$ is given. For two Borel measurable functions
$f,g:\mathbb{R}\to\mathbb{C}$, we define
\begin{align*}
\varphi_N(f) &= \frac{1}{N}
\mathbb{E}(\textnormal{Tr}(f(X_N))) \quad \text{ and }
\\ \rho_N(f,g) &=
\textnormal{Cov}(\textnormal{Tr}(f(X_N)),\textnormal{Tr}(g(X_N))).
\end{align*}
Recall that $M$ is the constant from A1. For
$f:\mathbb{R}\to\mathbb{C}$, we define
$f_M:\mathbb{R}\to\mathbb{C}$ by $f_M(x) = f(x)
\mathbbm{1}_{|x|\leq M}$. The proofs of our main results
rely heavily on the following truncation lemmas.

\begin{lemma}
\label{Lemma:Truncation}
Let $(X_N)_N$ be a self-adjoint random matrix ensemble
satisfying A0. Assume that $f,g:\mathbb{R}\to\mathbb{C}$ are
Borel measurable functions.
\begin{itemize}	
	\item[$(a)$] If $f$ and $g$ are bounded, then, for
          all $N\in\mathbb{N}$,
	\begin{equation}
	\label{eqa:TruncationInqA}
		\left|\rho_N(f,g) - \rho_N(f_M,g_M)\right|
                \leq 4 \|f\|_\infty \|g\|_\infty N^2
                \mathbb{P}(\|X_N\|>M)^{1/4}.
	\end{equation}
	\item[$(b)$] If $f$ and $g$ are polynomially
          bounded, then there exists $K_{f,g}>0$ such that,
          for all $N\in\mathbb{N}$,
	\begin{equation}
	\label{eqa:TruncationInqB}
		\left|\rho_N(f,g) - \rho_N(f_M,g_M)\right|
                \leq K_{f,g} N^2
                \mathbb{P}(\|X_N\|>M)^{1/4}.
	\end{equation}
\end{itemize}
\end{lemma}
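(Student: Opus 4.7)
The proof rests on the observation that, on the event $E := \{\|X_N\| \leq M\}$, every eigenvalue of $X_N$ lies in $[-M, M]$ and hence $\Tr f(X_N) = \Tr f_M(X_N)$ (and likewise for $g$). Writing $A = \Tr f(X_N)$, $A' = \Tr f_M(X_N)$, $B = \Tr g(X_N)$, $B' = \Tr g_M(X_N)$, and $\Delta A = A - A'$, $\Delta B = B - B'$, the random variables $\Delta A$ and $\Delta B$ are both supported on $E^c = \{\|X_N\| > M\}$. Bilinearity of the covariance then gives
\begin{equation*}
\rho_N(f, g) - \rho_N(f_M, g_M) = \mathrm{Cov}(A', \Delta B) + \mathrm{Cov}(\Delta A, B') + \mathrm{Cov}(\Delta A, \Delta B),
\end{equation*}
and the plan is to bound each of these three covariance terms separately.

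The common tool is H\"older's inequality with the indicator $\mathbbm{1}_{E^c}$ placed in the $L^4$ slot. For a generic pair $X, Y$ with $Y = Y \mathbbm{1}_{E^c}$, H\"older with exponents $(p_1, p_2, 4)$ satisfying $1/p_1 + 1/p_2 = 3/4$ yields $|\mathbb{E}[X Y]| \leq \|X\|_{p_1} \|Y\|_{p_2} \mathbb{P}(E^c)^{1/4}$, and analogously $|\mathbb{E} Y| \leq \|Y\|_{4/3} \mathbb{P}(E^c)^{1/4}$. Applying this to each of the three covariances above (to both $\mathbb{E}[XY]$ and $\mathbb{E}[X]\mathbb{E}[Y]$) produces the advertised $\mathbb{P}(\|X_N\| > M)^{1/4}$ factor.

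For part (a) the bounds $|A'| \leq N \|f\|_\infty$ and $|\Delta B| \leq |B| + |B'| \leq 2 N \|g\|_\infty$ hold pointwise, so every $L^p$ norm that appears is controlled by $N\|f\|_\infty$ or $2N\|g\|_\infty$, and a careful tally of the six resulting terms yields the stated bound $4\|f\|_\infty \|g\|_\infty N^2 \mathbb{P}(\|X_N\| > M)^{1/4}$. Part (b) follows the same architecture, but we must now control $\|B\|_{4/3}$ (and a handful of related $L^p$ norms) without a pointwise bound on $g$. Writing $|g(x)| \leq C_g(1 + |x|^{p_g})$ gives $|B| \leq C_g(N + \Tr|X_N|^{p_g})$; the power-mean inequality $(\sum_i |\lambda_i|^{p_g})^p \leq N^{p-1}\sum_i |\lambda_i|^{p\,p_g}$ together with the elementary bound $|x|^q \leq 1 + x^{2\lceil q/2 \rceil}$ then reduces the task to bounding $\mathbb{E}[\Tr X_N^{2k}]$. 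By A0, $\frac{1}{N}\mathbb{E}[\Tr X_N^{2k}] \to \alpha_{2k}$, so this quantity is $O(N)$ for each fixed $k$, and chasing exponents yields $\|B\|_{4/3} = O(N)$.

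The main technical obstacle is precisely this last step in part (b): keeping the $N$-power of $\|B\|_p$ equal to $N^1$ (rather than some higher power) after the Jensen/power-mean step, so that after multiplying by $\|A'\|_\infty = O(N)$ one gets $N^2$ and not more. Once this is in hand, all three covariance terms are of order $N^2 \mathbb{P}(\|X_N\| > M)^{1/4}$ with a constant depending only on $f$, $g$, $M$, and finitely many of the $\alpha_{2k}$, yielding the promised $K_{f,g}$.
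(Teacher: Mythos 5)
Your decomposition is exactly the one the paper uses: your $\mathrm{Cov}(A',\Delta B)+\mathrm{Cov}(\Delta A, B')+\mathrm{Cov}(\Delta A,\Delta B)$ is precisely the paper's $\mathrm{I}+\mathrm{II}+\mathrm{III}$, just written more compactly (the paper spells out $\Delta B = \sum_j g(\lambda_j)\mathbbm{1}_{|\lambda_j|>M}$ term by term). The subsequent estimates are a mild variation: the paper pushes the sum over eigenvalues outside the covariance and applies Cauchy--Schwarz and H\"older per eigenvalue, whereas you keep the whole trace and put the single event indicator $\mathbbm{1}_{E^c}$ in an $L^4$ slot of H\"older. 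Both routes give bounds of the required form, and your whole-trace version is, if anything, a slightly tighter inequality before any pointwise estimates are inserted. So the approach is essentially the same as the paper's.

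Two small points worth tightening. First, your pointwise bound $|\Delta B|\leq |B|+|B'|\leq 2N\|g\|_\infty$ is loose by a factor of $2$: since $\Delta B = \sum_j g(\lambda_j)\mathbbm{1}_{|\lambda_j|>M}$, one has directly $|\Delta B|\leq N\|g\|_\infty\mathbbm{1}_{E^c}$. With the weaker bound your careful tally lands at a constant around $8$, not the stated $4$; the sharper pointwise bound recovers the paper's constant (of course, any absolute constant would serve the downstream uses of the lemma, but the statement as given says $4$). Second, in part $(b)$ the exposition focuses on $\|B\|_{4/3}$, but for the $\mathrm{Cov}(\Delta A,\Delta B)$ term H\"older forces at least one of the exponents to be $\geq 8/3$, so you also need $\|\Delta A\|_{8/3},\|\Delta B\|_{8/3}=O(N)$. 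This is fine by the same power-mean chain, since $\mathbb{E}[|\Delta B|^p]\leq N^{p-1}\mathbb{E}[\Tr|g(X_N)|^p]\leq N^p\varphi_N(q)$ for a suitable polynomial $q$ dominating $|g|^p$, and $\varphi_N(q)$ is bounded by A$0$; but it is worth stating explicitly that the argument is needed for exponents larger than $4/3$, and that the resulting constant depends only on $f,g,M$ and finitely many $\alpha_{2k}$ via $\sup_N\varphi_N(q)$, exactly as in the paper's $K_{f,g}$.
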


\begin{proof}
Let $\lambda_1\leq\cdots\leq\lambda_N$ be the eigenvalues of
$X_N$. Observe that, for any Borel measurable function
$h:\mathbb{R}\to\mathbb{C}$,
\begin{equation*}
	\textnormal{Tr}(h(X_N)) = \textnormal{Tr}(h_M(X_N))
        + \sum_{i=1}^N
        h(\lambda_i)\mathbbm{1}_{|\lambda_i|>M}.
\end{equation*}
In particular, we have that $\displaystyle
\rho_N\left(f,g\right) - \rho_N(f_M,g_M) = \text{I} +
\text{II} + \text{III}$, where
\begin{align*}
\text{I} &= \sum_{j=1}^N
\textnormal{Cov}(\textnormal{Tr}(f_M(X_N)),
g(\lambda_j)\mathbbm{1}_{|\lambda_j|>M}),\\ \text{II} &=
\sum_{i=1}^N
\textnormal{Cov}(f(\lambda_i)\mathbbm{1}_{|\lambda_i|>M},
\textnormal{Tr}(g_M(X_N))),\\ \text{III} &= \sum_{i,j=1}^N
\textnormal{Cov}(f(\lambda_i)\mathbbm{1}_{|\lambda_i|>M},
g(\lambda_j)\mathbbm{1}_{|\lambda_j|>M}).
\end{align*}
By the Cauchy-Schwarz inequality, we have that
\begin{equation*}
|\text{I}| \leq \textnormal{Var}(\textnormal{Tr}
(f_M(X_N)))^{1/2} \sum_{j=1}^N
\textnormal{Var}(g(\lambda_j)\mathbbm{1}_{|\lambda_j|>M})^{1/2}.
\end{equation*}
Another application of the Cauchy-Schwarz inequality shows
that, for any Borel measurable function
$h:\mathbb{R}\to\mathbb{C}$,
\begin{equation*}
	|\textnormal{Tr}(h_M(X_N))|^2 \leq N \sum_{i=1}^N
        |h_M(\lambda_i)|^2 \leq N
        \textnormal{Tr}(|h(X_N)|^2).
\end{equation*}
Therefore, 
\begin{equation*}
	\textnormal{Var}(\textnormal{Tr}(f_M(X_N)) \leq
        \mathbb{E}(|\textnormal{Tr}(f_M(X_N))|^2) \leq N^2
        \varphi_N(|f|^2),
\end{equation*}   
and hence
\begin{equation*}
	|\text{I}| \leq N \varphi_N(|f|^2)^{1/2}
        \sum_{j=1}^N
        \textnormal{Var}(g(\lambda_j)\mathbbm{1}_{|\lambda_j|>M})^{1/2}.
\end{equation*}
H\"{o}lder's inequality implies that, for all $j\in\{1,\ldots,N\}$,
\begin{align*}
	\textnormal{Var}(g(\lambda_j)\mathbbm{1}_{|\lambda_j|>M}) & 
	\leq \mathbb{E}(|g(\lambda_j)|^2\mathbbm{1}_{|\lambda_j|>M}) \\
	 & \mbox{} \leq \mathbb{E}(|g(\lambda_j)|^4)^{1/2} \mathbb{P}(|\lambda_j|>M)^{1/2}.
\end{align*}
Observe that $\mathbb{P}(|\lambda_j|>M) \leq
\mathbb{P}(\|X_N\|>M)$. Thus,
\begin{equation*}
	|\text{I}| \leq N \, \varphi_N(|f|^2)^{1/2}
        \mathbb{P}(\|X_N\|>M)^{1/4} \, \sum_{j=1}^N
        \mathbb{E}(|g(\lambda_j)|^4)^{1/4}.
\end{equation*}
Another application of the generalized mean inequality shows that
\begin{equation*}
	\sum_{j=1}^N \mathbb{E}(|g(\lambda_j)|^4)^{1/4} \leq
        N \varphi_N(|g|^4)^{1/4}.
\end{equation*}
Therefore,
\begin{equation*}
	|\text{I}| \leq \varphi_N(|f|^2)^{1/2}
        \varphi_N(|g|^4)^{1/4} N^2
        \mathbb{P}(\|X_N\|>M)^{1/4}.
\end{equation*}
Mutatis mutandis, it is possible to show that
\begin{align*}
|\text{II}| &\leq \varphi_N(|f|^4)^{1/4}\,
|\varphi_N(|g|^2)^{1/2} \, N^2
|\mathbb{P}(\|X_N\|>M)^{1/4},\\ \text{III}| &\leq
|\varphi_N(|f|^4)^{1/4}\, \varphi_N(|g|^4)^{1/4} \, N^2
|\mathbb{P}(\|X_N\|>M)^{1/2}.
\end{align*}
The three inequalities for $|$I$|$, $|$II$|$, and $|$III$|$
imply that
\begin{equation*}
	\left|\rho_N(f,g) - \rho_N(f_M,g_M)\right| \leq
        K_{f,g,N} \, N^2 \mathbb{P}(\|X_N\|>M)^{1/4},
\end{equation*}
where
\begin{equation*}
	K_{f,g,N} = \left[\varphi_N(|f|^2)^{1/2} +
          \varphi_N(|f|^4)^{1/4}\right]
        \left[\varphi_N(|g|^2)^{1/2} +
          \varphi_N(|g|^4)^{1/4}\right].
\end{equation*}
Part $(a)$ is an easy consequence of the fact that, for any
Borel measurable function $h:\mathbb{R}\to\mathbb{C}$ and
$p>0$,
\begin{equation*}
	\varphi_N(|h|^p)^{1/p} = \left(\frac{1}{N}
        \sum_{i=1}^N \mathbb{E}(|h(\lambda_i)|^p)
        \right)^{1/p} \leq \|h\|_\infty.
\end{equation*}
If $f$ and $g$ are polynomially bounded, then there exists a
polynomial $q:\mathbb{R}\to\mathbb{R}$ such that
\begin{equation*}
	\max\{|f(x)|^2,|f(x)|^4,|g(x)|^2,|g(x)|^4\} \leq q(x),
\end{equation*}
for all $x\in\mathbb{R}$. In particular, for all
$N\in\mathbb{N}$,
\begin{equation*}
	K_{f,g,N} \leq \left[\varphi_N(q)^{1/2} +
          \varphi_N(q)^{1/4}\right]^2.
\end{equation*}
By assumption A0, the limit $\displaystyle \lim_{N\to\infty}
\varphi_N(q)$ exists. Taking
\begin{equation*}
	K_{f,g} = \sup \left\{\left[\varphi_N(q)^{1/2} +
          \varphi_N(q)^{1/4}\right]^2 :
        N\in\mathbb{N}\right\},
\end{equation*}
Part $(b)$ now follows.
\end{proof}

We recall that $f|_M$ denotes the restriction of
$f:\mathbb{R}\to\mathbb{C}$ to the interval $[-M,M]$. Note
that $f|_M$ is a function on $[-M,M]$, while $f_M$ is a
function on $\mathbb{R}$. For a function
$f:\mathbb{R}\to\mathbb{C}$, we let
\begin{equation*}
	\tilde{f}(x) = \begin{cases} f(x) &
          x\in[-M,M],\\ f(M) + f'(M) (x-M) e^{-\alpha (x-M)}
          & x>M,\\ f(-M) + f'(-M)(x+M) e^{\alpha (x+M)} &
          x<-M,\end{cases}
\end{equation*}
where $\alpha=\|f'\|_M / (e\|f\|_M)$ with $\|h\|_M = \sup \{
|h(x)| : x\in[-M,M]\}$. By construction, $\tilde{f}_M =
(\tilde f)_M = f_M$. It is not hard to verify that if
$f:\mathbb{R}\to\mathbb{C}$ is such that $f|_M\in
C^1([-M,M])$, then
\begin{equation*}
	\tilde{f}\in C^1(\mathbb{R}), \quad
        \|\tilde{f}\|_\infty \leq 2 \|f\|_M, \quad \text{
          and } \quad \|\tilde{f}'\|_\infty = \|f'\|_M.
\end{equation*}

\begin{lemma}
\label{Lemma:FiniteDimensionContinuity}
Let $(X_N)_N$ be a self-adjoint random matrix ensemble
satisfying \az{} and \at{}, and let $K$ be the constant in
\at{}. Assume that $f,g:\mathbb{R}\to\mathbb{C}$ are Borel
measurable functions such that $f|_M,g|_M\in C^1([-M,M])$.
\begin{itemize}
	\item[$(a)$] If $f$ and $g$ are bounded, then, for
          all $N\in\mathbb{N}$,
	\begin{equation*}
		|\rho_N(f,g)| \leq 20 \|f\|_\infty
                \|g\|_\infty N^2 \mathbb{P}(\|X_N\| >
                M)^{1/4} + K \|f'\|_M \|g'\|_M.
	\end{equation*}
	\item[$(b)$] If $f$ and $g$ are polynomially
          bounded, then there exists $K_{f,g}>0$ such that,
          for all $N\in\mathbb{N}$,
	\begin{equation*}
		|\rho_N(f,g)| \leq K_{f,g} N^2
                \mathbb{P}(\|X_N\| > M)^{1/4} + K \|f'\|_M
                \|g'\|_M.
	\end{equation*}
\end{itemize}
\end{lemma}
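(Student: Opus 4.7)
The plan is to combine the truncation estimate just proved (Lemma~\ref{Lemma:Truncation}) with the Poincar\'e-type inequality \at{}, glued together by the explicit $C^1(\bR)$-extensions $\tilde f, \tilde g$ defined in the paragraph preceding the statement. The role of the extensions is to pass from $f|_M, g|_M$, which live only on $[-M, M]$ and to which \at{} cannot be applied directly, to honest $C^1(\bR)$ functions on which \at{} gives a clean bound, while keeping control of both supremum and derivative norms in terms of $\|f\|_M, \|f'\|_M$ and $\|g\|_M, \|g'\|_M$.

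First I would split
\[
\rho_N(f, g) = \bigl[\rho_N(f, g) - \rho_N(f_M, g_M)\bigr] + \rho_N(f_M, g_M)
\]
and bound the bracketed error by Lemma~\ref{Lemma:Truncation}: part $(a)$ handles case $(a)$ producing a contribution $4\|f\|_\infty \|g\|_\infty N^2 \mathbb{P}(\|X_N\|>M)^{1/4}$, while part $(b)$ handles case $(b)$ producing a contribution $K_{f,g}^{(1)} N^2 \mathbb{P}(\|X_N\|>M)^{1/4}$ for some constant $K_{f,g}^{(1)}$ depending on $f$ and $g$.

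Next I would exploit the key identity $\tilde f_M = f_M$ and $\tilde g_M = g_M$ to rewrite $\rho_N(f_M, g_M) = \rho_N(\tilde f_M, \tilde g_M)$, and then apply Lemma~\ref{Lemma:Truncation}$(a)$ to the \emph{bounded} functions $\tilde f, \tilde g$, obtaining
\[
\lvert\rho_N(\tilde f, \tilde g) - \rho_N(\tilde f_M, \tilde g_M)\rvert \leq 4\|\tilde f\|_\infty \|\tilde g\|_\infty N^2 \mathbb{P}(\|X_N\|>M)^{1/4} \leq 16\|f\|_M \|g\|_M N^2 \mathbb{P}(\|X_N\|>M)^{1/4},
\]
where I used the bound $\|\tilde f\|_\infty \leq 2\|f\|_M$ recorded just above the statement. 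Since $\tilde f, \tilde g \in C^1(\bR)$, assumption \at{} then yields $\lvert\rho_N(\tilde f, \tilde g)\rvert \leq K\|\tilde f'\|_\infty \|\tilde g'\|_\infty = K\|f'\|_M \|g'\|_M$, using the companion identity $\|\tilde f'\|_\infty = \|f'\|_M$.

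Finally, the triangle inequality assembles the three bounds. For $(a)$, using $\|f\|_M \leq \|f\|_\infty$, the two truncation errors combine to $(4+16)\|f\|_\infty \|g\|_\infty N^2 \mathbb{P}(\|X_N\|>M)^{1/4}$, reproducing the stated constant $20$; for $(b)$, the two errors combine into a single term of the form $K_{f,g} N^2 \mathbb{P}(\|X_N\|>M)^{1/4}$ with $K_{f,g} := K_{f,g}^{(1)} + 16\|f\|_M \|g\|_M$. There is no serious obstacle, only one point of care: in case $(b)$ the original $f$ may genuinely be unbounded, but its tailored extension $\tilde f$ is always bounded (by $2\|f\|_M$), which is exactly what permits applying the sharper part $(a)$ of the truncation lemma to the pair $(\tilde f, \tilde g)$ in both cases and thereby keeps the $K_{f,g}$ in $(b)$ from picking up any $\|f\|_\infty$.
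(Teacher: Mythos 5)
Your proof is correct and follows exactly the paper's approach: split $\rho_N(f,g)$ via the truncation $f_M = \tilde f_M$, apply Lemma~\ref{Lemma:Truncation}$(a)$ (resp.\ $(b)$) to the pair $(f,g)$, apply Lemma~\ref{Lemma:Truncation}$(a)$ to the bounded extensions $(\tilde f,\tilde g)$, and finish with \at{} and the norm identities $\|\tilde f\|_\infty\le 2\|f\|_M$, $\|\tilde f'\|_\infty=\|f'\|_M$. The constant accounting ($4+16=20$, and $K_{f,g}=K_{f,g}^{(1)}+16\|f\|_M\|g\|_M$) also matches the paper.
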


\begin{proof}
Recall that, by construction $\tilde{h}_M = h_M$. In
particular,
\begin{align}
\label{eq:Splitrhofg}\lefteqn{
	|\rho_N(f,g)| \leq |\rho_N(f,g) - \rho_N(f_M,g_M)| }
\\ &\mbox{} + |\rho_N(\tilde{f}_M,\tilde{g}_M) -
\rho_N(\tilde{f},\tilde{g})| +
|\rho_N(\tilde{f},\tilde{g})|.
\end{align}
If $f$ and $g$ are bounded, Part $(a)$ of
Lemma~\ref{Lemma:Truncation} implies that
\begin{align*}
	|\rho_N(f,g)| & \leq 4 (\|f\|_\infty \|g\|_\infty +
        \|\tilde{f}\|_\infty \|\tilde{g}\|_\infty) N^2
        \mathbb{P}(\|X_N\|>M)^{1/4} \\ & \qquad\mbox{} +
        |\rho_N(\tilde{f},\tilde{g})|.
\end{align*}
Since $\|\tilde{f}\|_\infty \leq 2 \|f\|_M \leq 2
\|f\|_\infty$, we conclude that
\begin{equation*}
	|\rho_N(f,g)| \leq 20 \|f\|_\infty \|g\|_\infty N^2
        \mathbb{P}(\|X_N\|>M)^{1/4} +
        |\rho_N(\tilde{f},\tilde{g})|.
\end{equation*}
By A2, we have that $|\rho_N(\tilde{f},\tilde{g})| \leq K
\|\tilde{f}'\|_\infty \|\tilde{g}'\|_\infty = K \|f'\|_M
\|g'\|_M$. Part $(a)$ follows.

If $f$ and $g$ are polynomially bounded,
\eqref{eq:Splitrhofg} and Part $(b)$ of
Lemma~\ref{Lemma:Truncation} imply that
\begin{equation*}
	|\rho_N(f,g)| \leq (K_{f,g}'+4\|\tilde{f}\|_\infty
        \|\tilde{g}\|_\infty) N^2
        \mathbb{P}(\|X_N\|>M)^{1/4} +
        |\rho_N(\tilde{f},\tilde{g})|,
\end{equation*}
for some $K_{f,g}'>0$. Let $K_{f,g} = K_{f,g}'+16\|f\|_M
\|g\|_M$. In particular,
\begin{align*}
|\rho_N(f,g)| &\leq K_{f,g} N^2 \mathbb{P}(\|X_N\|>M)^{1/4}
+ |\rho_N(\tilde{f},\tilde{g})|\\ &\leq K_{f,g} N^2
\mathbb{P}(\|X_N\|>M)^{1/4} + K \|f'\|_M \|g'\|_M,
\end{align*}
where the last inequality follows from A2 and the fact that
$\|\tilde{f}'\|_\infty = \|f'\|_M$.
\end{proof}

\begin{proof}[\bf Proof of Theorem~\ref{Theorem:BilinearFunctional}]
We start by constructing the bilinear mapping $\rho$. Let
$p,q\in\mathbb{C}[x]$. We define
\begin{equation*}
	\rho(p|_M,q|_M) = \lim_{N\to\infty} \rho_N(p,q).
\end{equation*}
Note that, by A0, the limit in the previous equation
exists. By Part $(b)$ of
Lemma~\ref{Lemma:FiniteDimensionContinuity}, we have that
\begin{equation*}
	|\rho_N(p,q)| \leq K_{p,q} N^2
        \mathbb{P}(\|X_N\|>M)^{1/4} + K \|p'\|_M \|q'\|_M,
\end{equation*}
for some $K_{p,q}>0$. Taking limits, A1 implies then
\begin{equation*}
	|\rho(p|_M,q|_M)| \leq K \|p'\|_M \|q'\|_M.
\end{equation*}
In particular, we have 
\[
|\rho(p|_M, q|_M)| \leq K \| p \| \| q \|,
\]
where $\lVert p \rVert = \lvert p(0) \rvert + \lVert p'
\rVert_{M}$ as introduced in
Notation~\ref{notation:our_space_of_functions}. Since the
polynomials without constant term are dense in $C^1([-M,
  M])^\circ$ with respect to the norm $\| \cdot \|$, it is a
standard procedure to extend $\rho$ continuously (in each
argument) to $C^1([-M,M])^\circ$ with the same bound. We
extend $\rho$ to $\bC \oplus C^1([-M, M])^\circ$ by making
$\rho$ vanish on $\bC \oplus 0$; again without increasing
the norm. Let $\rho:C^1([-M,M]) \times C^1([-M,M]) \to
\mathbb{C}$ denote this extension. This proves
$(a)$. Since we already had $\rho(p,1) =
\rho(1,q) = 0$ for polynomials $p$ and $q$, this extension
is consistent with the definition of $\rho$.

We prove $(b)$ next. Let $f:\mathbb{R}\to\mathbb{C}$ be a
polynomially bounded function with $f|_M\in C^1([-M,M])$ and
$q\in\mathbb{C}[x]$. For any polynomial $p\in\mathbb{C}[x]$,
we set $\rho_N(f,q) - \rho(f|_M,q|_M) = \text{I} + \text{II}
+ \text{III}$, where
\begin{align*}
	\text{I} &= \rho_N(f,q) -\rho_N(p,q) \text{, }
        \text{II} = \rho_N(p,q) - \rho(p|_M,q|_M) \text{,
        }\\ &\qquad\qquad\text{III} = \rho(p|_M,q|_M) -
        \rho(f|_M,q|_M).
\end{align*}
By Part $(b)$ of
Lemma~\ref{Lemma:FiniteDimensionContinuity}, we have that
\begin{align*}
	|\text{I}| \leq K_{f-p,q} N^2 \mathbb{P}(\|X_N\| >
        M)^{1/4} + K \|(f-p)'\|_M \|q'\|_M,
\end{align*}
for some $K_{f-p,q}>0$. Let $\epsilon>0$. By the density of
the polynomials in $C^1([-M,M])$ with respect to the
$C^1$-norm and the continuity of $\rho$ with respect to the
same norm, there exists $p_0\in\mathbb{C}[x]$ such that
\begin{equation*}
	\|(f-p_0)'\|_M < \frac{\epsilon}{6K\|q'\|_M} \quad
        \text{ and } |\text{III}| < \frac{\epsilon}{3}.
\end{equation*}
By construction, $\rho_N(p_0,q) \to \rho((p_0)|_M,q|_M)$ as
$N\to\infty$. Combined with A1, this implies that there
exists $N_0\in\mathbb{N}$ such that for all $N\geq N_0$
\begin{equation*}
	|\text{II}| < \frac{\epsilon}{3} \quad \text{ and }
        K_{f-p_0,q} N^2 \mathbb{P}(\|X_N\| > M)^{1/4} <
        \frac{\epsilon}{6}.
\end{equation*}
Therefore, for all $N>N_0$, $|\rho_N(f,q) - \rho(f|_M,q|_M)|
< \epsilon$, i.e.,
\begin{equation*}
	\lim_{N\to\infty} \rho_N(f,q) = \rho(f|_M,q|_M).
\end{equation*}
The previous equation and a similar argument show that
\begin{equation*}
	\lim_{N\to\infty} \rho_N(f,g) = \rho(f|_M,g|_M)
\end{equation*}
for all polynomially bounded functions
$f,g:\mathbb{R}\to\mathbb{C}$ with $f|_M,g|_M\in
C^1([-M,M])$. This proves Part $(b)$.

In order to prove Part $(c)$, let
$\Phi:C([-M,M])^2\to\mathbb{C}$ be given by $\Phi(f,g) =
\rho(F,G)$, where
\begin{equation*}
	F(x) = \int_{0}^x f(t) \mathrm{d} t \quad \text{ and
        } \quad G(x) = \int_{0}^x g(t) \mathrm{d} t.
\end{equation*}
Note that $\Phi$ is bilinear and, by Part $(a)$,
\begin{equation*}
	|\Phi(f,g)| = |\rho(F,G)| \leq K \|f\|_M \|g\|_M.
\end{equation*}
By Theorem \ref{thm:frechet_II}, there exists
$u:[-M,M]^2\to\mathbb{R}$ of bounded Fr\'{e}chet variation
such that, for all $f,g\in C([-M,M])$,
\begin{equation}
\label{eq:ProofIntegralRepresentation}
	\Phi(f,g) = \int_{-M}^{M} \int_{-M}^{M} f(x) g(x) \mathrm{d} u(x,y).
\end{equation}
Then $\rho(f, g) = \Phi(f', g') = \displaystyle
\int_{-M}^{M} \int_{-M}^{M} f'(x) g'(y) \, \mathrm d u(x,
y)$, as claimed.
\end{proof}

For $z\in\mathbb{C}$, we let
$r_z:\mathbb{C}\setminus\{z\}\to\mathbb{C}$ be given by
$\displaystyle r_z(x) = \frac{1}{z-x}$ and $r_{z,M} =
(r_z)_M$. Note that, for all
$z\in\mathbb{C}\setminus[-M,M]$, the function
$r_{z,M}:\mathbb{R}\to\mathbb{C}$ satisfies
\begin{equation*}
	\|r_{z,M}\|_\infty = d(z)^{-1} \quad \text{ and }
        \quad \|r_{z,M}'\|_M = d(z)^{-2},
\end{equation*}
where $d(z) := \inf \{ |z-x| : x\in[-M,M]\}$.  We define
$G_{2,M}^{(N)}:(\mathbb{C}\setminus[-M,M])^2\to\mathbb{C}$
by
\begin{equation}
\label{eq:DefG2Truncated}
	G_{2,M}^{(N)}(z,w) = \rho_N(r_{z,M},r_{w,M}).
\end{equation}
It is not hard to verify that $G_{2,M}^{(N)}$ is analytic. Indeed,
\begin{align*}
\rho_N( r_{z,M}, r_{w, M}) &= \sum_{i,j, = 1}^N \E(
\mathbbm{1}_{|\lambda_i|\leq M} (z- \lambda_{i})^{-1}
\mathbbm{1}_{|\lambda_j|\leq M} (w - \lambda_{j})^{-1})
\\ &\qquad\quad\mbox{} - \E(\mathbbm{1}_{|\lambda_i|\leq
  M}(z- \lambda_{i})^{-1}) \E( \mathbbm{1}_{|\lambda_j|\leq
  M}(w - \lambda_{j})^{-1}).
\end{align*}
Observe that the function $z \mapsto \E( \mathbbm{1}_{|\lambda_i|\leq M}
(z- \lambda_{i})^{-1})$ is the Cauchy transform of the
random variable $\lambda_{i}$ conditioned on $|\lambda_i|
\leq M$ times the probability of this event, and is analytic
on $\bC \setminus [-M, M]$, c.f. \cite[Lemma
  3.2]{MingoSpeicher2017}. If $|z' - z| \leq \frac{1}{2}
d(z)$, then for $w \in \bC \setminus [-M, M]$ we have
\begin{align*}\lefteqn{
\E(\mathbbm{1}_{|\lambda_i|\leq M}(z- \lambda_{i})^{-1} 
\mathbbm{1}_{|\lambda_j|\leq M}(w - \lambda_{j})^{-1}) }\\
& = -
\sum_{n=0}^\infty \E\bigg( 
\mathbbm{1}_{|\lambda_i|\leq M}\mathbbm{1}_{|\lambda_j|\leq M}
\frac{(w - \lambda_{j})^{-1}}{(\lambda_{i} - z)^{n+1}}\bigg) (z' - z)^n
\end{align*}
and the convergence is uniform on $\{ z' \in \bC \mid |z' -
z| < \frac{1}{2} d(z)\}$, c.f. \cite[the proof of Lemma
  3.2]{MingoSpeicher2017}. Thus $G_{2,M}^{(N)}$ is analytic
on $(\bC \setminus [-M, M])^2$. Moreover, it satisfies the
following boundedness property.

\begin{lemma}
If $(X_N)_N$ is a self-adjoint random matrix ensemble
satisfying $A0$, $A1$, and $A2$, then for every
$z_0,w_0\in\mathbb{C}\setminus[-M,M]$ there exists
$\delta>0$ such that
\begin{equation*}
	\sup \left\{\big|G_{2,M}^{(N)}(z,w)\big| :
        |z-z_0|<\delta,|w-w_0|<\delta,N\in\mathbb{N}\right\}
        < \infty.
\end{equation*}
\end{lemma}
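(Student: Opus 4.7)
The plan is to apply Lemma~\ref{Lemma:FiniteDimensionContinuity}$(a)$ directly to the pair $f = r_{z,M}$, $g = r_{w,M}$, then use A1 to control the $N$-dependent term and the continuity of the distance function $d(\cdot) = \mathrm{dist}(\cdot,[-M,M])$ to handle the local uniformity in $(z,w)$.

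First, I would verify the hypotheses. For $z\in\mathbb{C}\setminus[-M,M]$, the function $r_{z,M}:\mathbb{R}\to\mathbb{C}$ is bounded with $\|r_{z,M}\|_\infty = d(z)^{-1}$; its restriction $r_z|_M$ to $[-M,M]$ is in $C^1([-M,M])$ and $\|(r_z|_M)'\|_M = d(z)^{-2}$. The analogous statements hold for $r_{w,M}$. Thus Lemma~\ref{Lemma:FiniteDimensionContinuity}$(a)$ applies and yields
\[
\bigl|G_{2,M}^{(N)}(z,w)\bigr| = |\rho_N(r_{z,M},r_{w,M})| \leq 20\, d(z)^{-1} d(w)^{-1} N^2 \mathbb{P}(\|X_N\|>M)^{1/4} + K\, d(z)^{-2} d(w)^{-2}.
\]

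Next, I would invoke A1: since $N^8 \mathbb{P}(\|X_N\|>M)\to 0$, the sequence $N^2\mathbb{P}(\|X_N\|>M)^{1/4} = \bigl(N^8\mathbb{P}(\|X_N\|>M)\bigr)^{1/4}$ is bounded, say by some constant $C_0$, uniformly in $N$. Therefore
\[
\bigl|G_{2,M}^{(N)}(z,w)\bigr| \leq 20 C_0\, d(z)^{-1} d(w)^{-1} + K\, d(z)^{-2} d(w)^{-2}
\]
for every $N\in\mathbb{N}$ and every $z,w\in\mathbb{C}\setminus[-M,M]$.

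Finally, I would choose $\delta = \tfrac{1}{2}\min\{d(z_0),d(w_0)\}>0$. By the triangle inequality, $|z-z_0|<\delta$ implies $d(z) \geq d(z_0)-\delta \geq \tfrac12 d(z_0)$, and similarly $d(w)\geq \tfrac12 d(w_0)$ whenever $|w-w_0|<\delta$. Substituting these lower bounds into the previous display gives a uniform finite upper bound on $|G_{2,M}^{(N)}(z,w)|$ over the set described, as required. There is no substantive obstacle here; the argument is essentially a book-keeping consequence of Lemma~\ref{Lemma:FiniteDimensionContinuity}$(a)$ together with the quantitative form of A1, and the only thing to note is that $r_{z,M}$ (being discontinuous at $\pm M$) is handled correctly by part $(a)$, which requires only boundedness of $f,g$ and $C^1$-smoothness of their restrictions to $[-M,M]$.
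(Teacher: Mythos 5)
Your proposal is correct and follows exactly the paper's argument: apply Lemma~\ref{Lemma:FiniteDimensionContinuity}$(a)$ to $r_{z,M}, r_{w,M}$, note that A1 makes $\{N^2\mathbb{P}(\|X_N\|>M)^{1/4}\}_N$ bounded, and take $\delta=\tfrac12\min\{d(z_0),d(w_0)\}$ so that $d(z),d(w)$ stay bounded below on the neighborhood. Your added remark on why the discontinuity of $r_{z,M}$ at $\pm M$ is harmless (part $(a)$ only needs boundedness of $f,g$ and $C^1$-smoothness of the restrictions) is a correct and useful clarification of what the paper leaves implicit.
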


\begin{proof}
By Part $(a)$ of
Lemma~\ref{Lemma:FiniteDimensionContinuity}, for all
$z,w\in\mathbb{C}\setminus[-M,M]$,
\begin{align*}
	|G_{2,M}^{(N)}(z,w)| &\leq 20 \|r_{z,M}\|_\infty
        \|r_{w,M}\|_\infty N^2 \mathbb{P}(\|X_N\| > M)^{1/4}
        \\ &\qquad \mbox{} + K \|r_{z,M}'\|_M
        \|r_{w,M}'\|_M\\ &= 20 d(z)^{-1} d(w)^{-1} N^2
        \mathbb{P}(\|X_N\| > M)^{1/4} + K d(z)^{-2}
        d(w)^{-2}.
\end{align*}
Let $\delta=\frac{1}{2} \min(d(z_0),d(w_0))$. Note that for
all $z,w\in\mathbb{C}$ such that $|z-z_0|<\delta$ and
$|w-w_0|<\delta$, we have that $d(z)>\delta$ and
$d(w)>\delta$. Assumption $A1$ implies that $\{N^2
\mathbb{P}(\|X_N\|>M)^{1/4} : N\in\mathbb{N}\}$ is
bounded. The lemma now follows.
\end{proof}

\begin{lemma}
\label{Lemma:KeyConvergenceLemma}
If $(X_N)_N$ is a self-adjoint random matrix ensemble
satisfying $A0$, $A1$, and $A2$, then the family
$\left\{G_{2,M}^{(N)} : N\in\mathbb{N}\right\}$ converges
uniformly in compact subsets of
$(\mathbb{C}\setminus[-M,M])^2$ and, for all $|z|,|w|>M$,
\begin{equation*}
	\lim_{N\to\infty} G_{2,M}^{(N)}(z,w) = G_2(z,w).
\end{equation*}
\end{lemma}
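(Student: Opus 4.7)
The plan is to combine the local uniform bound from the preceding lemma with a Vitali-type normal families argument. The limit will first be identified on the dense open subset $(\mathbb{C}\setminus\mathbb{R})^2$ via Theorem~\ref{Theorem:BilinearFunctional}(b) and the truncation estimate Lemma~\ref{Lemma:Truncation}(a), and then promoted to uniform convergence on compact subsets of $(\mathbb{C}\setminus[-M,M])^2$.

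First, the preceding lemma provides local uniform boundedness of $\{G_{2,M}^{(N)}\}_N$ on the connected open set $(\mathbb{C}\setminus[-M,M])^2$, where each $G_{2,M}^{(N)}$ has already been shown to be analytic. By Montel's theorem the family is normal. Next I would establish pointwise convergence on $(\mathbb{C}\setminus\mathbb{R})^2$. For $z,w\in\mathbb{C}\setminus\mathbb{R}$, the resolvent functions $r_z$ and $r_w$ are bounded and smooth on $\mathbb{R}$, hence lie in $\mathcal{S}$, so Theorem~\ref{Theorem:BilinearFunctional}(b) yields $\rho_N(r_z,r_w)\to\rho(r_z|_M,r_w|_M)$. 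Lemma~\ref{Lemma:Truncation}(a) gives
\[
|\rho_N(r_z,r_w)-\rho_N(r_{z,M},r_{w,M})| \leq 4\,\|r_z\|_\infty\|r_w\|_\infty\, N^2\,\mathbb{P}(\|X_N\|>M)^{1/4},
\]
and assumption A1, which says $N^8\mathbb{P}(\|X_N\|>M)\to 0$, is exactly the rate needed to make the right-hand side vanish. Thus $G_{2,M}^{(N)}(z,w)=\rho_N(r_{z,M},r_{w,M})\to\rho(r_z|_M,r_w|_M)$ pointwise on $(\mathbb{C}\setminus\mathbb{R})^2$. Since this set has non-empty interior in $(\mathbb{C}\setminus[-M,M])^2$, Vitali's theorem (or, equivalently, the identity principle applied to every Montel-subsequential limit) promotes the pointwise convergence to uniform convergence on compact subsets of $(\mathbb{C}\setminus[-M,M])^2$.

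It remains to identify the limit with $G_2$ on $\{|z|,|w|>M\}$. For $|z|>M$ the geometric series $r_z(x)=\sum_{m\geq 0} x^m/z^{m+1}$ converges uniformly on $[-M,M]$ together with its term-by-term derivative, and likewise for $r_w$. The bound \eqref{eq:BoundednessRho}, $|\rho(f,g)|\leq K\|f'\|_M\|g'\|_M$, lets me interchange $\rho$ with the double sum (exactly as in the proof of the earlier proposition computing the analytic second-order Cauchy transform), giving
\[
\rho(r_z|_M,r_w|_M) = \sum_{m,n\geq 1} \frac{\alpha_{m,n}}{z^{m+1}w^{n+1}} = G_2(z,w),
\]
where $\rho(1,\cdot)=\rho(\cdot,1)=0$ eliminates the $m=0$ and $n=0$ terms.

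The main obstacle is the careful bookkeeping connecting the three quantities $\rho_N(r_{z,M},r_{w,M})$, $\rho_N(r_z,r_w)$ and $\rho(r_z|_M,r_w|_M)$: the truncation lemma links the first two under A1, while Theorem~\ref{Theorem:BilinearFunctional}(b) links the second to the third. Once pointwise convergence is in hand on the dense open set $(\mathbb{C}\setminus\mathbb{R})^2$, the normal-family machinery does the rest.
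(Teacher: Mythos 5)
Your proof is correct, but it takes a somewhat different route from the paper's.  The paper establishes pointwise convergence directly on $\{|z|,|w|>M\}$ by expanding each $G_{2,M}^{(N)}(z,w)$ as a double power series $\sum_{m,n}\rho_N(\pi^{(m)}_M,\pi^{(n)}_M)/(z^{m+1}w^{n+1})$ (using the indicator to justify the geometric series inside the covariance and Tonelli--Fubini), then applies dominated convergence with the uniform bound $|\rho_N(\pi^{(m)}_M,\pi^{(n)}_M)|\leq mnBM^{m+n}$ from Lemma~\ref{Lemma:FiniteDimensionContinuity}(a), and finally uses Lemma~\ref{Lemma:Truncation} to identify each term's limit as $\alpha_{m,n}$.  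You instead locate the pointwise limit on $(\mathbb{C}\setminus\mathbb{R})^2$, exploiting the fact that $r_z\in\mathcal{S}$ there so that Theorem~\ref{Theorem:BilinearFunctional}(b) immediately gives $\rho_N(r_z,r_w)\to\rho(r_z|_M,r_w|_M)$, and the truncation estimate trades $\rho_N(r_z,r_w)$ for $G_{2,M}^{(N)}(z,w)=\rho_N(r_{z,M},r_{w,M})$; you then need a \emph{separate} interchange-of-limit argument (via the bound $|\rho(f,g)|\leq K\|f'\|_M\|g'\|_M$ and uniform convergence of the geometric series and its derivative) to identify $\rho(r_z|_M,r_w|_M)$ with $G_2(z,w)$ on $\{|z|,|w|>M\}$.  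Both proofs finish with the same normal-families (Montel/Vitali) step, which is unavoidable here.  Your route leans more heavily on Theorem~\ref{Theorem:BilinearFunctional}(b) already being in hand and avoids the explicit series manipulation of $G_{2,M}^{(N)}$, at the cost of a two-stage identification of the limit (first as $\rho(r_z,r_w)$, then as $G_2$); the paper's route is more hands-on but self-contained, building $G_2$ directly from the truncated moments.  One small point worth making explicit in your step interchanging $\rho$ with the double sum: you should either iterate the one-variable argument (fix $w$, expand in $z$, then expand each coefficient in $w$) or invoke absolute convergence from $|\rho(x^m,x^n)|\leq KmnM^{m+n-2}$ to justify the rearrangement into a double series; as written it is slightly compressed but the needed estimates are all available from \eqref{eq:BoundednessRho}.
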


\begin{proof}
By definition,
\begin{equation*}
	G_{2,M}^{(N)}(z,w) =
        \textnormal{Cov}\left(\sum_{i=1}^N
        \frac{\mathbbm{1}_{|\lambda_i|\leq
            M}}{z-\lambda_i},\sum_{j=1}^N
        \frac{\mathbbm{1}_{|\lambda_j|\leq
            M}}{w-\lambda_j}\right).
\end{equation*}
If $|z|,|w|>M$, then we have that
\begin{equation*}
	(z-\lambda_i)^{-1} = \sum_{m\geq0}
  \frac{\lambda_i^m}{z^{m+1}} \quad \text{ and } \quad
  (w-\lambda_j)^{-1} = \sum_{n\geq0}
  \frac{\lambda_j^n}{w^{n+1}}.
\end{equation*}
In particular, for such $z$ and $w$, we have that
\begin{equation*}
	G_{2,M}^{(N)}(z,w) =
        \textnormal{Cov}\left(\sum_{i=1}^N \sum_{m\geq0}
        \frac{\lambda_i^m \mathbbm{1}_{|\lambda_i|\leq
            M}}{z^{m+1}},\sum_{j=1}^N \sum_{n\geq0}
        \frac{\lambda_j^n \mathbbm{1}_{|\lambda_j|\leq
            M}}{w^{n+1}}\right).
\end{equation*}
For each $k\in\mathbb{N}$, let
$\pi^{(k)}:\mathbb{R}\to\mathbb{R}$ denote the function
given by $\pi^{(k)}(x) = x^k$. With this notation, we can
rewrite the previous equation as
\begin{equation*}
	G_{2,M}^{(N)}(z,w) =
        \textnormal{Cov}\left(\sum_{m\geq0}
        \frac{\textnormal{Tr}(\pi^{(m)}_M(X_N))}{z^{m+1}},\sum_{n\geq0}
        \frac{\textnormal{Tr}(\pi^{(n)}_M(X_N))}{w^{n+1}}\right).
\end{equation*}
Since $|\textnormal{Tr}(\pi^{(k)}_M(X_N))|\leq NM^k$, a
routine application of Tonelli-Fubini theorem implies that
\begin{equation*}
	G_{2,M}^{(N)}(z,w) = \sum_{m,n\geq0}
        \frac{\rho_N(\pi^{(m)}_M,\pi^{(n)}_M)}{z^{m+1}w^{n+1}}.
\end{equation*}
Note that $\pi^{(k)}_M$ satisfies that $\displaystyle
\|\pi^{(k)}_M\|_\infty = M^k$ and $\displaystyle
\|(\pi^{(k)}_M)'\|_M = k M^{k-1}$. By Part $(a)$ of
Lemma~\ref{Lemma:FiniteDimensionContinuity}, we obtain
\begin{equation*}
	|\rho_N(\pi^{(m)}_M,\pi^{(n)}_M)| \leq 20 M^{m+n}
        N^2 \mathbb{P}(\|X_N\| > M)^{1/4} + mnKM^{m+n-2}.
\end{equation*}
By A1, the set $\left\{N^2 \mathbb{P}(\|X_N\|>M)^{1/4} :
N\in\mathbb{N}\right\}$ is bounded. Hence,
\begin{equation*}
	|\rho_N(\pi^{(m)}_M,\pi^{(n)}_M)| \leq mnBM^{m+n},
\end{equation*}
for some constant $B>0$ independent of $m$ and $n$. The
previous inequality and the dominated convergence theorem
imply that
\begin{equation*}
	\lim_{N\to\infty} G_{2,M}^{(N)}(z,w) =
        \sum_{m,n\geq0} \lim_{N\to\infty}
        \frac{\rho_N(\pi^{(m)}_M,\pi^{(n)}_M)}{z^{m+1}w^{n+1}},
\end{equation*}
for all $|z|,|w|>M$. Part $(a)$ of
Lemma~\ref{Lemma:Truncation} implies that
\begin{equation*}
	\lim_{N\to\infty} \rho_N(\pi^{(m)}_M,\pi^{(n)}_M) =
        \rho(\pi^{(m)}|_M,\pi^{(n)}|_M) = \alpha_{m,n}.
\end{equation*}
In other words, for all $|z|,|w|>M$,
\begin{equation}
\label{eq:LimitG2MNG2}
	\lim_{N\to\infty} G_{2,M}^{(N)}(z,w) = G_2(z,w).
\end{equation}
By the previous lemma, the family
$\left\{G_{2,M}^{(N)}\right\}_N$ is locally
bounded. Therefore, Montel's theorem
\cite[pp. 33]{Scheidemann2005} and \eqref{eq:LimitG2MNG2}
imply that $\left\{G_{2,M}^{(N)}\right\}_N$ converges
uniformly on compact sets to an analytic function on
$(\mathbb{C}\setminus[-M,M])^2$.
\end{proof}

For notational simplicity, for each
$z,w\in\mathbb{C}\setminus\mathbb{R}$, we let
\begin{equation*}
	G_2^{(N)}(z,w) = \rho_N(r_z,r_w) =
        \textnormal{Cov}(\textnormal{Tr}((z-X_N)^{-1}),
        \textnormal{Tr}((w-X_N)^{-1})).
\end{equation*}

\begin{proof}[\bf Proof of Theorem~\ref{Theorem:ConvergenceSOCauchyTransform}]
The previous lemma states that $G_2(z,w)= \lim_{N}
G_{2,M}^{(N)}(z,w)$ for all $|z|,|w|>M$. Also, it
establishes that $\left\{G_{2,M}^{(N)}\right\}_N$ converges
uniformly in compact subsets of
$(\mathbb{C}\setminus[-M,M])^2$ to an analytic function,
which extends $G_2$ to the domain $\{ (z, w) \mid |z|, |w| >
M\}$; we also call this extension $G_2$. By Part $(a)$ of
Lemma~\ref{Lemma:Truncation}, we have that for $z, w \not\in
\bR$,
\begin{equation*}
	\left|G_2^{(N)}(z,w) - G_{2,M}^{(N)}(z,w)\right|
        \leq 4 \|r_z\|_\infty \|r_w\|_\infty N^2
        \mathbb{P}(\|X_N\|>M)^{1/4}.
\end{equation*}
Since $\|r_z\|_\infty\leq|\Im z|^{-1}$ for all
$z\in\mathbb{C}\setminus\mathbb{R}$, $A1$ implies that
\begin{equation*}
	\lim_{N\to\infty} G_2^{(N)}(z,w) = \lim_{N\to\infty}
        G_{2,M}^{(N)}(z,w) = G_2(z,w),
\end{equation*}
as required.
\end{proof}

\begin{proof}[\bf Proof of Theorem~\ref{Theorem:FluctuationsLinearStatistics}]
Whenever $|\lambda_i|\leq M$, Cauchy integral formula
implies that
\begin{equation*}
	f(\lambda_i) = \frac{1}{2\pi i} \int_\mathcal{C}
        \frac{f(z)}{z-\lambda_i} \mathrm{d} z.
\end{equation*}
In particular, we have that
\begin{equation*}
	\rho_N(f_M,g_M) =\kern-3pt \sum_{i,j=1}^N
        \textnormal{Cov}\Big(\frac{1}{2\pi i}
        \int_\mathcal{C} \frac{f(z)
          \mathbbm{1}_{|\lambda_i|\leq M}}{z-\lambda_i}
        \mathrm{d} z,\frac{1}{2\pi i} \int_\mathcal{C}
        \frac{g(w) \mathbbm{1}_{|\lambda_j|\leq
            M}}{w-\lambda_j} \mathrm{d} w\Big).
\end{equation*}
A routine application of the Tonelli-Fubini theorem shows then that
\begin{align*}
	\rho_N(f_M,g_M) &= \sum_{i,j=1}^N \frac{1}{(2\pi
          i)^2} \int_\mathcal{C} \int_\mathcal{C} f(z) g(w)
        \textnormal{Cov}\Big(\frac{\mathbbm{1}_{|\lambda_i|\leq
            M}}{z-\lambda_i},\frac{\mathbbm{1}_{|\lambda_j|\leq
            M}}{w-\lambda_j}\Big) \mathrm{d} z \mathrm{d}
        w\\ &= \frac{1}{(2\pi i)^2} \int_\mathcal{C}
        \int_\mathcal{C} f(z) g(w) G_{2,M}^{(N)}(z,w)
        \mathrm{d} z \mathrm{d} w.
\end{align*}
By Lemma~\ref{Lemma:KeyConvergenceLemma},
$\left\{G_{2,M}^{(N)}\right\}_N$ converges uniformly to (the
extension of) $G_2$ on the compact set
$\mathcal{C}\times\mathcal{C}$. Therefore, by the dominated
convergence theorem,
\begin{equation*}
	\lim_{N\to\infty} \rho_N(f_M,g_M) = \frac{1}{(2\pi
          i)^2} \int_\mathcal{C} \int_\mathcal{C} f(z) g(w)
        G_2(z,w) \mathrm{d} z \mathrm{d} w.
\end{equation*}
By Part $(b)$ of Theorem~\ref{Theorem:BilinearFunctional},
we conclude that
\begin{equation*}
	\lim_{N\to\infty} \rho_N(f,g) = \rho(f|_M,g|_M) =
        \lim_{N\to\infty} \rho_N(f_M,g_M).
\end{equation*}
Equation \eqref{eq:ConvergenceLinearStatistics} follows.
\end{proof}

\section{Summary and Concluding Remarks}
\label{Section:ConcludingRemarks}

Under general assumptions, we established the existence and
boundedness of the asymptotic covariance mapping $\rho$
(Theorem~\ref{Theorem:BilinearFunctional}). Also, we showed
that the second-order Cauchy transform $G_{2}$ admits an
analytic extension which is equal to the limit of the
covariance of resolvents
(Theorem~\ref{Theorem:ConvergenceSOCauchyTransform}). Furthermore,
we showed that the asymptotic covariance mapping $\rho$
could be recovered from the second-order Cauchy transform
(Theorem~\ref{Theorem:FluctuationsLinearStatistics}). For
random matrix ensembles having a second-order limit
distribution, we showed that the fluctuations of the linear
statistics of a real smooth (test) function $f$ are
asymptotically Gaussian with mean zero and variance
$\rho(f,f)$ (Proposition~\ref{Proposition:CLT}). In addition
we proved that if $(X_N)_N$ is a self-adjoint random matrix
ensemble satisfying A0, A1, and A2, then there exists
$u:\mathbb{R}^2\to\mathbb{R}$ of bounded Fr\'{e}chet
variation such that $\textnormal{Supp}(u)$ is compact and
\begin{equation*}
	G_2(z,w) = \int_{\mathbb{R}^2} \frac{1}{(z-x)^2}
        \frac{1}{(w-y)^2} \mathrm{d} u(x,y),
\end{equation*}
where the integral above is in the sense of Fr\'{e}chet
\cite{Frechet1915,Hildebrandt1963}. In this case, we say
that $u$ is the second-order analytic distribution of
$(X_N)_N$.

Let $\mathcal{F}$ be the set of functions on $\mathbb{R}^2$
of bounded Fr\'{e}chet variation which are the second-order
analytic distribution of a random matrix ensemble satisfying
A0, A1, and A2. In this setting, the characterization of
$\mathcal{F}$ is of interest.

\vspace{10pt}

\noindent{\bf Q1.} Given $u:\mathbb{R}^2\to\mathbb{R}$ of
bounded Fr\'{e}chet variation, does it belong to
$\mathcal{F}$?

\vspace{10pt}

Assume that $u_1,u_2\in\mathcal{F}$ with associated random
matrix ensembles $(X_N)_N$ and $(Y_N)_N$, respectively. For
each $N\in\mathbb{N}$, let $U_N$ be an $N\times N$ Haar
unitary matrix independent of $X_N$ and $Y_N$.

\vspace{10pt}

\noindent{\bf Q2.} The random matrix ensemble
$(X_N+U_NY_NU_N^*)_N$ satisfies A0 and A1, does it satisfy
also A2?

\vspace{10pt}

If the previous question has an affirmative answer, we can
define the second-order free additive convolution of $u_1$
and $u_2$ as the second-order analytic distribution of
$X_N+U_NY_NU_N^*$. This would define a binary operation on
$\mathcal{F}$:
\begin{equation*}
	\boxplus_2 : \mathcal{F}\times\mathcal{F} \to
        \mathcal{F},
\end{equation*}
similarly as the $\boxplus$-operation does in the
first-order setting.

In bi-free probability theory there are Cauchy transforms of
the form
\begin{equation*}
	G(z,w) = \int_{\mathbb{R}^2} \frac{1}{z-x}
        \frac{1}{w-y} \mathrm{d} u(x,y),
\end{equation*}
for some compactly supported measure $u$. Note that
\begin{equation}
\label{eq:SOBF}
	\frac{\partial^2}{\partial z \partial w} G(z,w) =
        \int_{\mathbb{R}^2} \frac{1}{(z-x)^2}
        \frac{1}{(w-y)^2} \mathrm{d} u(x,y) = G_2(z,w).
\end{equation}
In view of the expression (see
\cite[Eq. (5.20)]{MingoSpeicher2017}), where $F(z) =
1/G(z)$,
\begin{equation*}
	G_2(z,w) = G'(z)G'(w)R(G(z),G(w)) +
        \frac{\partial^2}{\partial z \partial w} \log
        \frac{F(z)-F(w)}{z-w},
\end{equation*}
equation \eqref{eq:SOBF} suggests a potential connection
between second-order free probability theory and bi-free
probability theory.

\vspace{10pt}

\noindent{\bf Q3.} If we assume that $R(z,w)\equiv0$, is it
possible to provide new examples in bi-free probability
theory using second-order free probability ones?

\vspace{10pt}

In particular, the previous question provides some
motivation to study the case $R(z,w)\equiv0$, which indeed
has attracted some attention in the past.

\section*{Acknowledgements}
MD would like to thank Arturo Jaramillo and Roland Speicher
for fruitful discussions while preparing this paper. Also,
MD would like to thank Malors Espinosa for his keen comments
concerning Lemma~\ref{Lemma:ApproximationRunge}.

\begin{appendix}
\section{Proof of Proposition~\ref{Proposition:CLT}}
\label{Appendix:ProofPropositionCLT}

The following lemma is an easy consequence of Runge's
theorem and the Schwarz Reflection Principle. For a set
$S\subset\mathbb{C}$, we let $S^*=\{z\in\mathbb{C} :
\overline{z}\in S\}$.

\begin{lemma}
\label{Lemma:ApproximationRunge}
Let $\Omega\subset\mathbb{C}$ be a domain. Assume that
$\mathcal{K}=\mathcal{K}^*$ is a compact subset of $\Omega$
whose complement is connected. If $f:\Omega\to\mathbb{C}$ is
an analytic function such that
$f(\mathbb{R})\subset\mathbb{R}$, then there exist
polynomials $(p_k)_{k\in\mathbb{N}}\subset\mathbb{R}[z]$
such that
\begin{equation*}
	\lim_{k\to\infty} \|p_k - f\|_\mathcal{K} = 0 \quad
        \text{ and } \quad \lim_{k\to\infty}
        \|(p_k-f)'\|_\mathcal{K} = 0,
\end{equation*}
where $\|g\|_\mathcal{K} = \sup \{|g(z)| : z\in\mathcal{K}\}$.
\end{lemma}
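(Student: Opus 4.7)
The plan is to combine Runge's polynomial approximation theorem with the Schwarz Reflection Principle (to obtain real coefficients) and Cauchy's integral formula (to transfer uniform convergence to the derivative). First I would reduce to the situation where $\Omega$ is symmetric about the real axis. Since $\mathcal{K}=\mathcal{K}^{*}\subset\Omega$, one has $\mathcal{K}\subset\Omega\cap\Omega^{*}$; replacing $\Omega$ by the union of those connected components of $\Omega\cap\Omega^{*}$ which meet $\mathcal{K}$, I may assume $\Omega=\Omega^{*}$. The hypothesis $f(\mathbb{R})\subset\mathbb{R}$ together with the Schwarz Reflection Principle then yields the key identity $f(\bar{z})=\overline{f(z)}$ for every $z\in\Omega$.

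Next I would produce a symmetric compact set $\mathcal{K}'$ with $\mathcal{K}\subset\text{int}(\mathcal{K}')\subset\mathcal{K}'\subset\Omega$ and $\mathbb{C}\setminus\mathcal{K}'$ connected. The standard construction is to fix $\varepsilon>0$ small enough that the closed $\varepsilon$-neighborhood $\overline{\mathcal{K}_{\varepsilon}}$ lies in $\Omega$ (this set is automatically symmetric), and then adjoin all bounded components of its complement. Runge's theorem, applied on $\mathcal{K}'$, then furnishes a sequence of complex polynomials $q_{k}$ with $\|q_{k}-f\|_{\mathcal{K}'}\to 0$.

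To produce real coefficients I would symmetrize by setting
\[
p_{k}(z):=\tfrac{1}{2}\bigl(q_{k}(z)+\overline{q_{k}(\bar{z})}\bigr).
\]
If $q_{k}(z)=\sum_{j}a_{k,j}z^{j}$ then $\overline{q_{k}(\bar{z})}=\sum_{j}\overline{a_{k,j}}\,z^{j}$, so $p_{k}(z)=\sum_{j}\operatorname{Re}(a_{k,j})\,z^{j}\in\mathbb{R}[z]$. For $z\in\mathcal{K}$, using that $\bar{z}\in\mathcal{K}^{*}=\mathcal{K}\subset\mathcal{K}'$ and $f(z)=\overline{f(\bar{z})}$, I obtain
\[
|p_{k}(z)-f(z)|\leq\tfrac{1}{2}|q_{k}(z)-f(z)|+\tfrac{1}{2}|q_{k}(\bar{z})-f(\bar{z})|\leq\|q_{k}-f\|_{\mathcal{K}'},
\]
so $\|p_{k}-f\|_{\mathcal{K}}\to 0$. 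For the derivative, pick $\delta>0$ such that $\overline{D(z,\delta)}\subset\mathcal{K}'$ for every $z\in\mathcal{K}$ (possible because $\mathcal{K}$ is a compact subset of $\text{int}(\mathcal{K}')$). Then Cauchy's integral formula gives
\[
(p_{k}-f)'(z)=\frac{1}{2\pi i}\oint_{|\zeta-z|=\delta}\frac{p_{k}(\zeta)-f(\zeta)}{(\zeta-z)^{2}}\,d\zeta,
\]
and hence $\|(p_{k}-f)'\|_{\mathcal{K}}\leq\delta^{-1}\|p_{k}-f\|_{\mathcal{K}'}\to 0$.

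The main obstacle is the construction of $\mathcal{K}'$: one must verify that the bounded components of $\mathbb{C}\setminus\overline{\mathcal{K}_{\varepsilon}}$ that are adjoined actually remain inside $\Omega$, exploiting the connectedness of $\mathbb{C}\setminus\mathcal{K}$ together with the positive distance between $\mathcal{K}$ and $\partial\Omega$. Once $\mathcal{K}'$ is in hand, the symmetrization and the Cauchy estimate are essentially mechanical.
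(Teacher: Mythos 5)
The paper offers no written proof of this lemma; it is cited only as ``an easy consequence of Runge's theorem and the Schwarz Reflection Principle,'' and your proposal uses exactly these ingredients, together with the Cauchy estimate to pass from $C^{0}$-convergence on a slightly larger compact to $C^{1}$-convergence on $\mathcal{K}$. So the approach matches what the paper intends, and the symmetrization $p_{k}=\tfrac12\bigl(q_{k}+\overline{q_{k}\circ\text{conj}}\bigr)$ and the Cauchy-kernel derivative bound are correct as written.

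Two remarks. First, the step you flag as the ``main obstacle'' is indeed the only delicate point in the construction of $\mathcal{K}'$, and it does go through: if for a sequence $\varepsilon_{n}\downarrow 0$ some bounded component of $\mathbb{C}\setminus\overline{\mathcal{K}_{\varepsilon_{n}}}$ contained a point $z_{n}\notin\Omega$, a subsequential limit $z$ would satisfy $z\notin\Omega$, hence $z\notin\mathcal{K}$; a path from $z$ to a large circle inside the connected set $\mathbb{C}\setminus\mathcal{K}$ keeps a positive distance $\delta$ from $\mathcal{K}$, and for $n$ large (so $\varepsilon_{n}<\delta/2$ and $|z_{n}-z|<\delta/2$) this path, augmented by the segment $[z_{n},z]$, lies in $\mathbb{C}\setminus\overline{\mathcal{K}_{\varepsilon_{n}}}$ and places $z_{n}$ in the unbounded component, a contradiction. (In the paper's actual application one can shortcut all this: there $\Omega$ is an open disc, so one may take $\mathcal{K}'$ to be a concentric closed disc.) Second, and more substantively, the assertion that Schwarz reflection yields $f(\bar z)=\overline{f(z)}$ ``for every $z\in\Omega$'' after the reduction to $\Omega\cap\Omega^{*}$ requires the relevant set to be connected (or at least each component to meet $\mathbb{R}$): if $\Omega\cap\Omega^{*}$ has a conjugate pair of components disjoint from $\mathbb{R}$, the identity can genuinely fail (one can arrange this with a branch of a logarithm whose branch point sits between the two ``sheets''), and in that situation the conclusion of the lemma itself fails, since any sequence $p_{k}\in\mathbb{R}[z]$ converging uniformly on $\mathcal{K}$ forces $f(\bar z)=\overline{f(z)}$ on $\mathcal{K}$. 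This is a limitation of the lemma as stated rather than of your argument, and it is harmless where the paper uses it: there $\Omega$ is a disc, so $\Omega\cap\Omega^{*}$ is convex hence connected, and it contains $[-M,M]$; your proof then closes without issue.
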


\begin{proof}[Proof of Proposition~\ref{Proposition:CLT}] Let real valued $f \in \mathcal{S}$ be given. 
Let $\cK\subset\bC$ be a compact set that contains
$[-M,M]$. Let $\Omega \subset \bC$ be an open disc
containing $\cK$. By Lemma \ref{Lemma:ApproximationRunge}
there exist real polynomials $(p_k)_k$ such that
\begin{equation*}
	\lim_{k\to\infty} \|p_k - f\|_\mathcal{K} = 0 \quad
        \text{ and } \quad \lim_{k\to\infty}
        \|(p_k-f)'\|_\mathcal{K} = 0.
\end{equation*}
For notational simplicity, for all $k\in\mathbb{N}$ and
$N\in\mathbb{N}$, we let
\begin{equation*}
	Z_N^{(k)} = \textnormal{Tr}(p_k(X_N)) -
        \mathbb{E}(\textnormal{Tr}(p_k(X_N))).
\end{equation*}
Clearly $k_1\left(Z_N^{(k)}\right)=0$. From Part ii) in the
definition of second-order limit distribution, it is
immediate to see that, for all $r\geq3$,
\begin{equation*}
	\lim_{N\to\infty} k_r\left(Z_N^{(k)},\ldots,Z_N^{(k)}\right) = 0.
\end{equation*}
Since the Gaussian distribution on $\mathbb{R}$ is
characterized by its moments, and hence by its cumulants, we
conclude from Part (b) of
Theorem~\ref{Theorem:BilinearFunctional} that
$\{\displaystyle Z_N^{(k)}\}_N$ converges in distribution
(i.e., $Z_N^{(k)} \Rightarrow Z^{(k)}$) to $Z^{(k)} \sim
{\mathcal N}_\mathbb{R}(0,\sigma_k^2)$, where $\sigma_k^2 =
\rho(p_k|_M,p_k|_M)$. Let $\sigma^2 = \rho(f|_M,f|_M)$. By
Part a) of the same theorem,
\begin{align*}
|\sigma_k^2 - \sigma^2| &\leq  |\rho(p_k|_M,p_k|_M) - \rho(p_k|_M,f|_M)|\\
& \qquad \mbox{}  + |\rho(p_k|_M,f|_M) - \rho(f|_M,f|_M)|\\
&\leq K (\|p_k'\|_\mathcal{K} + \|f'\|_\mathcal{K}) \|(p_k-f)'\|_\mathcal{K}.
\end{align*}
In particular, $\sigma_k^2 \to \sigma^2$ and hence $Z^{(k)}
\Rightarrow Z \sim {\mathcal N}_\mathbb{R}(0,\sigma^2)$ as
$k\to\infty$.

For all $N\in\mathbb{N}$, we let $Z_N =
\textnormal{Tr}(f(X_N)) -
\mathbb{E}(\textnormal{Tr}(f(X_N)))$. Let
$g:\mathbb{R}\to\mathbb{R}$ be a bounded Lipschitz
function. Note that, for all $N\in\mathbb{N}$ and all
$k\in\mathbb{N}$,
\begin{equation*}
	\mathbb{E}(g(Z_N)) - \mathbb{E}(g(Z)) = \alpha_{k,N}
        + \beta_{k,N} + \gamma_{k},
\end{equation*}
where
\begin{align*}
	\alpha_{k,N} &= \mathbb{E}\left(g(Z_N) -
        g\left(Z_N^{(k)}\right)\right),\\ \beta_{k,N} &=
        \mathbb{E}\left(g\left(Z_N^{(k)}\right) -
        g\left(Z^{(k)}\right)\right),\\ \gamma_{k} &=
        \mathbb{E}\left(g\left(Z^{(k)}\right) - g(Z)\right).
\end{align*}
Let $L_g$ be a Lipschitz constant for $g$. A routine
computation shows that
\begin{equation*}
	|\alpha_{k,N}| \leq L_g \,
        \mathbb{E}\left(\left|Z_N-Z_N^{(k)}\right|\right)
        \leq L_g \textnormal{Var}(Z_N-Z_N^{(k)})^{1/2}.
\end{equation*}
Observe that
\begin{equation*}
	\textnormal{Var}(Z_N-Z_N^{(k)}) =
        \textnormal{Var}(\textnormal{Tr}((p_k-f)(X_N))) =
        \rho_N(p_k-f,p_k-f).
\end{equation*}
By Part b) of Lemma~\ref{Lemma:FiniteDimensionContinuity},
there exists $C_k>0$ such that, for all $N\in\mathbb{N}$,
\begin{equation*}
	|\rho_N(p_k-f,p_k-f)| \leq C_k N^2
        \mathbb{P}(\|X_N\| > M)^{1/4} + K
        \|(p_k-f)'\|_\mathcal{K}^2.
\end{equation*}
In particular, we have that
\begin{equation*}
	|\alpha_{k,N}| \leq L_g \left(C_k N^2
        \mathbb{P}(\|X_N\|>M)^{1/4} + K
        \|(p_k-f)'\|_\mathcal{K}^2\right)^{1/2}.
\end{equation*}
Let $\epsilon>0$. Since $Z^{(k)} \Rightarrow Z$, we have
that $\mathbb{E}(g\left(Z^{(k)}\right)) \to
\mathbb{E}(g(Z))$ as $k\to\infty$. Let $k_0\in\mathbb{N}$ be
such that
\begin{equation*}
	|\gamma_{k_0}| =
        \left|\mathbb{E}(g\left(Z^{(k_0)}\right)) -
        \mathbb{E}(g(Z))\right| \leq \frac{\epsilon}{3}
        \quad \text{ and } \quad
        \|(p_{k_0}-f)'\|_\mathcal{K}^2 \leq
        \frac{\epsilon^2}{18KL_g^2}.
\end{equation*}
Since $N^8 \mathbb{P}(\|X_N\|>M) \to 0$ and $Z_N^{(k_0)}
\Rightarrow Z^{(k_0)}$ as $N\to\infty$, there exists
$N_0\in\mathbb{N}$ such that for all $N>N_0$
\begin{align*}\lefteqn{
C_{k_0} N^2 \mathbb{P}(\|X_N\|>M)^{1/4} \leq
\frac{\epsilon^2}{18L_g^2}} \\
  &
\text{ and } \quad |\beta_{k_0,N}| =
\left|\mathbb{E}\left(g\left(Z_N^{(k_0)}\right)\right) -
\mathbb{E}\left(g\left(Z^{(k_0)}\right)\right)\right|\leq
\frac{\epsilon}{3}.
\end{align*}
Therefore, $\displaystyle |\mathbb{E}(g(Z_N)) -
\mathbb{E}(g(Z))| \leq \epsilon$ for all $N>N_0$, i.e.,
\begin{equation*}
	\lim_{N\to\infty} \mathbb{E}(g(Z_N)) =
        \mathbb{E}(g(Z)).
\end{equation*}
The Portmanteau lemma implies then that $Z_N \Rightarrow
Z$. In other words,
\begin{equation*}
	\textnormal{Tr}(f(X_N)) -
        \mathbb{E}(\textnormal{Tr}(f(X_N))) \Rightarrow
               {\mathcal N}_\mathbb{R}(0,\sigma^2),
\end{equation*}
as we claimed.
\end{proof}
\end{appendix}

\end{document}